\newtheorem{theorem}{Theorem}[section]
\newtheorem{lemma}[theorem]{Lemma}
\newtheorem{proposition}[theorem]{Proposition}
\newtheorem{corollary}[theorem]{Corollary}
\theoremstyle{definition}
\newtheorem{definition}[theorem]{Definition}
\newtheorem{example}[theorem]{Example}
\theoremstyle{remark}
\numberwithin{equation}{section}
\begin{document}

\title{GEOMETRIC EQUIVALENCE of $\pi$-TORSION-FREE NILPOTENT GROUPS.}

%    Information for first author
\author{R. Lipyanski}
%    Address of record for the research reported here
\address{Department of Mathematics, Ben Gurion University, Beer Sheva 84105, Israel}

%    Current address
\curraddr{Department of Mathematics, Ben Gurion University, Beer Sheva 84105, Israel}
\email{lipyansk@math.bgu.ac.il}
%    \thanks will become a 1st page footnote.
%\thanks{The author was supported in part by Kamea Grant \#000000.}

%    Information for second author
%\author{Author Two}
%\address{Mathematical Research Section, School of Mathematical Sciences,
%Australian National University, Canberra ACT 2601, Australia}
%\email{two@maths.univ.edu.au}
%\thanks{Support information for the second author.}

%    General info
\subjclass{Primary 54C40, 14E20; Secondary 46E25, 20C20}
\date{January 1, 1994 and, in revised form, June 22, 1994.}

\dedicatory{This article is dedicated to my teacher Prof. B. Plotkin on his 90th anniversary.}

\keywords{Geometric equivalence, Nilpotent groups, $\mathbb{Q}_\pi$-powered groups}

\begin{abstract}
 In this paper we study the property of geometric equivalence of groups introduced by B. Plotkin \cite{P1, P2}. Sufficient and necessary conditions are presented for $\pi$-torsion-free nilpotent group to be geometrically equivalent to its $\pi$-completion.
 We prove that a relatively free nilpotent $\pi$-torsion-free group and its $\pi$-completion define the same quasi-variety. Examples of $\pi$-torsion-free nilpotent groups that are geometrically equivalent to their $\pi$-completions are given.
\end{abstract}

\maketitle

%%%%%%%%%%%%%%%%%%%%%%%%%%%%%%%%%%%%%%%%%%%%%%%%%%%%%%%%%%%%%%%%%%%%%%%%
%\footnote{Here is an example of a footnote. Notice that this footnote
%text is running on so that it can stand as an example of how a footnote
%with separate paragraphs should be written.
%And here is the beginning of the second paragraph.}%
%%%%%%%%%%%%%%%%%%%%%%%%%%%%%%%%%%%%%%%%%%%%%%%%%%%%%%%%%%%%%%%%%%%%%%%%
\section{Introduction }
 I attended Professor Plotkin's lectures about 45 years ago as a student at the University of Latvia. Over the years, I participated in Plotkin's seminars on group theory in Latvia and Israel. I am grateful to Boris Isaakovich Plotkin for  revealing
 to me the world of group theory in all its diversity. The problems posed by Plotkin at seminars and in his numerous papers have always been interesting to me. I would like to wish my teacher further creative success in Algebra in the next 30 years.

 The foundation of universal algebraic geometry for different algebraic systems were laid by B. Plotkin \cite{P1,P2, P3}. Within the framework of universal algebraic geometry (UAG), the important notion of geometric equivalence of universal algebras was introduced in \cite{P1, P2}. Geometric equivalence of two algebras means, in some sense, equal possibilities for solving systems of equations in these algebras.

  Algebraic geometry over groups was considered together with B.Plotkin by G. Baumslag, A. Miasnikov, V. Remeslennikov \cite{B1, B2} and others.

  In \cite{P3} B. Plotkin posed a question: Under what conditions is a nilpotent torsion-free group geometrically equivalent to its Mal'tsev's completion? Sufficient conditions for this equivalence were presented in \cite{T}. A criterion for a nilpotent torsion-free group to be geometrically equivalent to its Mal'tsev's completion was given by V. Bludov and  B. Guskov \cite{BG}. Based on this criterion, they also provided an example of a torsion-free three-step nilpotent group that is not geometrically equivalent to its Mal'tsev's completion.

 There exists a natural generalization  of the concept of a torsion-free group (see \cite{L1}). Let $\pi$ be a (non empty) set of primes. An integer is said to be a $\pi$-number if it is a product of powers of primes from $\pi$. A group $G$ is $\pi$-divisible if any equation of the form $x^n=a$, where $a\in G$ and $n$ is a $\pi$-number, has a solution in $G$. Group $G$ is $\pi$-torsion-free if any equation of the form  $x^n=1$, where $n$ is a $\pi$-number, has only the trivial solution in $G$. By definition, a $\pi$-divisible $\pi$-torsion-free group is called $\pi$-complete. It is known that every $\pi$-torsion-free nilpotent group $G$ can be embedded as a subgroup into a $\pi$-complete nilpotent group (see \cite{Kh1}). The minimal $\pi$-complete nilpotent group containing the group $G$ is called the $\pi$-completion of $G$. If $\pi$ consists of all primes, the $\pi$-completion of group $G$ coincides with its Mal'tsev's completion.

Here we study the problem related to geometric equivalence of a nilpotent $ \pi$-torsion-free group to its $\pi$-completion. We give necessary and sufficient conditions for this geometric equivalence. The proof of this criterion uses the theory of $\pi$-isolators of groups, as well as the approach proposed by V. Bludov and B. Guskov, to study the problem of geometric equivalence of a torsion-free nilpotent group to its Mal'tsev's completion. Based on this criterion we prove that a relatively free nilpotent $\pi$-torsion-free group is geometrically equivalent to its $\pi$-completion. If $\pi$ is the set of all primes, this result has already been proven by A. Tsurkov in \cite {T}.

In \cite{BG} it was proven that every torsion-free two-step nilpotent group is geometrically equivalent to its Mal'tsev's completion. This result corresponds to the case when $\pi$ is the set of all prime numbers. If $\pi$ is a set of primes, the situation becomes more complicated and requires additional consideration. We prove that if $G$ is a finitely generated $\pi$-torsion-free two-step nilpotent group such that the group $G/I_\pi{(G^{\prime})}$ is torsion free, then $G$ is geometrically equivalent to its $\pi$-completion. This assertion implies the above result on the geometric equivalence of an arbitrary two-step nilpotent torsion-free group to its Mal'tsev's completion.

The question of the geometric equivalence of an arbitrary $\pi$-torsion-free two-step nilpotent group to its $\pi$-completion remains open.

   \section{Preliminaries}
\subsection {Geometric equivalence of groups}
Let us recall the main definitions from \cite{P1}.

Let $F=F(X)$ be a free group generated by a finite set $X$ and $G$ be a group. The set ${\rm Hom}(F,G)$ of all homomorphisms from $F$ to $G$ can be treated as an affine space. It is possible to define a Galois correspondence $'$ between subsets in ${\rm Hom}(F,G)$ and subsets in $F$. For a subset $T\subseteq F$ define $T_{G}'$, $G$-closure of $T$
$$
T_{G}'=\{\varphi\in{\rm Hom}(F,G)\;|\;T\subseteq {\rm Ker}\varphi \}
$$
 On the other hand, for any subset $A\subseteq {\rm Hom}(F,G)$ define $A_{G}^{\prime}$, $G$-closure of $A$
 $$
 A_{G}^{\prime}=\bigcap\limits_{\mu\in A} Ker\mu
 $$
 The set $T\subseteq F$ can be treated as a system of equations that we solve in $G$. Then $T_{G}'$ is the set of all solutions in $G$ of this system. Similarly, the set $A$ can be treated as an affine space whose points are homomorphisms. Then $A_{G}'$ is the system of all equations written in $F$ such that $A$ is the set of all solutions of this system.
 \begin{definition}[\cite{P1},\cite{P4}]
 A subset $T\subseteq F$ (a subset $A\subseteq{\rm Hom}(F,G)$) is called $G$-closed if $T_G=T_{G}''$ (respectively, $A_G=A_{G}''$).
\end{definition}
\begin{definition}[\cite{P1},\cite{P2}]\label{Plot_1}
Two groups $G_1$ and $G_2$ are called geometrically equivalent if for any free group $F=F(X)$ of finite rank and for any  subset $T\subseteq F$
$$T_{G_1}''=T_{G_2}''$$
\end{definition}
In other words, the groups $G_1$ and $G_2$ are geometrically equivalent if and only if every  $G_1$-closed subset of the group $F$ is $G_2$-closed and vice versa. Note that the collection of all $G$-closed sets of the group $F$ defines the Zariski topology on it. Therefore, the groups $G_1$ and $G_2$ are geometrically equivalent if and only if they define the same Zariski topology on $F$.
\begin{proposition}[\cite{P1}]\label{cr}
Groups $G_1$ and $G_2$ are geometrically equivalent if and only if every finitely generated subgroup of $G_1$ can be approximated by subgroups of $G_2$ and vice versa.
\end{proposition}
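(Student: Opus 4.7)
The plan is to transport both sides of the equivalence to the same residual language. For any $T\subseteq F$, a homomorphism $\varphi\in\mathrm{Hom}(F,G)$ vanishes on $T$ iff it vanishes on the normal closure $N=\langle T\rangle^{F}$, and hence factors through the projection $\pi\colon F\to\bar F:=F/N$. Consequently
\[
T_G''=\pi^{-1}\!\Bigl(\bigcap_{\bar\varphi\in\mathrm{Hom}(\bar F,G)}\ker\bar\varphi\Bigr),
\]
so $T_{G_1}''=T_{G_2}''$ exactly when the finitely generated group $\bar F$ has the same residual kernel in $G_1$ as in $G_2$. Recall that a group $H$ is \emph{approximated by subgroups of $G$} when for every non-trivial $h\in H$ some homomorphism $\psi\colon H\to G$ satisfies $\psi(h)\neq 1$; this is the reformulation we want to match.

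For the forward direction, let $H\le G_1$ be finitely generated. Choose a surjection $\pi\colon F\to H$ with $F$ free of finite rank and kernel $N$, and apply the equivalence to $T=N$. The inclusion $H\subseteq G_1$ composed with $\pi$ shows that $T_{G_1}''=N$; by geometric equivalence $T_{G_2}''=N$ as well. For any non-trivial $h\in H$, lifting $h$ to $w\in F\setminus N$ produces a homomorphism $F\to G_2$ killing $N$ but not $w$, and this descends to a homomorphism $H\to G_2$ separating $h$ from the identity. Letting $h$ range over $H\setminus\{1\}$ shows $H$ is approximated by subgroups of $G_2$; symmetry handles the other half.

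For the reverse direction, assume the mutual approximation property and suppose, toward a contradiction, that $w\in T_{G_2}''\setminus T_{G_1}''$ for some $T\subseteq F$. Choose $\varphi\in\mathrm{Hom}(F,G_1)$ with $\varphi(T)=1$ and $\varphi(w)\neq 1$, and put $H=\varphi(F)\le G_1$, a finitely generated subgroup of $G_1$. By hypothesis some $\psi\colon H\to G_2$ satisfies $\psi(\varphi(w))\neq 1$; then the composite $\psi\circ\varphi\colon F\to G_2$ kills $T$ but not $w$, contradicting $w\in T_{G_2}''$. The reverse inclusion follows by swapping the roles of $G_1$ and $G_2$, which uses the other half of the approximation hypothesis.

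There is no deep obstacle; the only step requiring care is the identification $T_G''=\pi^{-1}(\bigcap_{\bar\varphi}\ker\bar\varphi)$, which is what allows one to phrase ``approximating $F/N$'' as ``approximating the finitely generated subgroup $\varphi(F)\le G_i$.'' Both reformulations carry the same content precisely because the hypothesis is stated symmetrically and involves finitely generated subgroups rather than abstract finitely generated quotients.
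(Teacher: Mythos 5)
Your proof is correct: both directions are the standard separation argument (closure of $T$ equals the residual kernel of $F/\langle T\rangle^{F}$, plus composing a homomorphism $F\to G_1$ with an approximating homomorphism of its finitely generated image into $G_2$), and the small claims such as $T_{G_1}''=N$ for $T=N=\ker(F\to H)$ are verified correctly. The paper itself gives no proof of this proposition (it is quoted from Plotkin's work), but your argument is essentially the one underlying the cited criterion and its refinements (Propositions \ref{em} and \ref{BG_1}) used later in the paper, so there is nothing to flag.
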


\begin{definition}[\cite{BG},\cite{T}]
 Let $H$ and $G$ be groups and $F=F(X)$ be any free group finitely generated by $X$. If every $H$-closed  subset of $F$ is $G$-closed, we say that the topology defined by $H$ is weaker than the topology defined by $G$ and write $H\preceq G$.
 \end{definition}

The relation $\preceq$ is a preorder on the class of groups. This preorder defines an order relation on classes of geometrically equivalent groups.
\begin{proposition}[\cite{BG},\cite{T}]\label{em}
 $H\preceq G$ if and only if any finitely generated subgroup of the group $H$ can be approximated by subgroups of $G$.
\end{proposition}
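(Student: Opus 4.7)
The plan is to prove this proposition by adapting the proof of the symmetric Proposition \ref{cr}: I will translate the condition that kernels of homomorphisms out of finitely generated free groups are $G$-closed into the existence of separating homomorphisms into $G$. The key structural observation is that every finitely generated subgroup $H_0\le H$ can be written as $F(X)/N$, where $F=F(X)$ is free of finite rank and $N$ is the kernel of the natural surjection $\pi:F\to H_0\hookrightarrow H$.

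For the forward implication, assume $H\preceq G$, fix a finitely generated $H_0\le H$, and present $H_0=F/N$ as above. I first observe that $N$ is $H$-closed: the map $\pi$ belongs to $N_H'$ and has kernel exactly $N$, so $N_H''\subseteq \ker\pi=N$, while $N\subseteq N_H''$ always. By hypothesis, $N$ is then $G$-closed,
\[
N=N_G''=\bigcap_{\psi\in N_G'}\ker\psi .
\]
Each $\psi\in N_G'$ factors through $H_0$ as $\psi=\bar\psi\circ\pi$ for a uniquely determined homomorphism $\bar\psi:H_0\to G$. Now, given any $h\in H_0$ with $h\ne 1$, I lift to $w\in F$ with $\pi(w)=h$; since $w\notin N=N_G''$, some $\psi\in N_G'$ satisfies $\psi(w)\ne 1$, and hence $\bar\psi(h)\ne 1$. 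This is exactly the statement that $H_0$ is approximated by subgroups of $G$.

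For the converse, assume every finitely generated subgroup of $H$ is approximated by subgroups of $G$, and let $T\subseteq F$ be an $H$-closed subset, i.e.\ $T=T_H''$. The inclusion $T\subseteq T_G''$ is formal. For the reverse inclusion, take $w\in F\setminus T$; then $w\notin T_H''$, so some $\varphi\in T_H'$ satisfies $\varphi(w)\ne 1$. The image $H_0:=\varphi(F)\le H$ is finitely generated, and by hypothesis there is a homomorphism $\chi:H_0\to G$ with $\chi(\varphi(w))\ne 1$. The composition $\chi\circ\varphi:F\to G$ still annihilates $T$, so it lies in $T_G'$, yet $(\chi\circ\varphi)(w)\ne 1$. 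Therefore $w\notin T_G''$, which gives $T=T_G''$; since $T$ was arbitrary, $H\preceq G$.

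The argument is essentially bookkeeping with Galois closures, so no step is a serious obstacle. The only point requiring a little care is the correct reading of ``approximated by subgroups of $G$'': it means that for every non-identity element $h$ of the finitely generated subgroup there exists a homomorphism into $G$ not killing $h$ (equivalently, the subgroup embeds into a direct product of subgroups of $G$). With this interpretation, both implications follow by the same dictionary between kernels of homomorphisms out of $F$ and $G$-closed subsets that underlies Proposition \ref{cr}.
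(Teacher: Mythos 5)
Your proof is correct, and it is essentially the standard argument: the paper itself states Proposition \ref{em} without proof, citing \cite{BG} and \cite{T}, and the argument given there (as for Proposition \ref{cr}) is the same Galois-correspondence bookkeeping you carry out — presenting a finitely generated subgroup as $F/N$, noting $N$ is $H$-closed with the separating homomorphisms into $G$ factoring through $F/N$, and conversely composing maps $F\to H$ with separating maps $H_0\to G$. Your reading of ``approximated by subgroups of $G$'' as residual separation (equivalently, embedding into a cartesian product of subgroups of $G$) is the one the paper uses, e.g.\ in the proof of Theorem \ref{criterion}, so no gap remains.
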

\begin{proposition}[\cite{BG},\cite{T}]\label{BG_1}
$H\preceq G$ for any subgroup $H$ of $G$.
\end{proposition}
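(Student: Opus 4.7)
The plan is to reduce Proposition \ref{BG_1} to Proposition \ref{em} and then observe that the required approximation is tautological. By Proposition \ref{em}, in order to prove $H\preceq G$ it suffices to verify that every finitely generated subgroup of $H$ can be approximated by subgroups of $G$. So let $K$ be an arbitrary finitely generated subgroup of $H$. Since $H\leq G$, the chain $K\leq H\leq G$ shows that $K$ is already a subgroup of $G$, and the identity embedding of $K$ into $G$ trivially witnesses the approximation of $K$ by the subgroup $K$ of $G$. Hence the hypothesis of Proposition \ref{em} is satisfied and $H\preceq G$ follows.

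As a self-contained alternative, one can argue directly from the Galois correspondence. The inclusion $\iota\colon H\hookrightarrow G$ induces, by composition, an injective map $\iota_*\colon {\rm Hom}(F,H)\to{\rm Hom}(F,G)$; since $\iota$ is injective, $\ker(\iota\circ\varphi)=\ker\varphi$ for every $\varphi\colon F\to H$. Therefore, for any $T\subseteq F$, we have $\iota_*(T_H')\subseteq T_G'$, so the intersection defining $T_G''$ is taken over a family of kernels that contains those defining $T_H''$. This yields $T_G''\subseteq T_H''$. If $T$ is $H$-closed, then $T=T_H''\supseteq T_G''$; combined with the general inclusion $T\subseteq T_G''$, this gives $T=T_G''$, so $T$ is $G$-closed.

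I do not anticipate any real obstacle: once one has either Proposition \ref{em} or the basic functoriality of the Galois correspondence under an injective homomorphism, the statement is immediate. The only routine verification is that composition with an injective map preserves kernels, which is tautological. It is worth noting that the analogous statement for a quotient $G\twoheadrightarrow H$ would fail, since quotient maps need not preserve kernels; the argument genuinely uses the injectivity of the inclusion $H\hookrightarrow G$.
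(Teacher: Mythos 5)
Your proposal is correct. The paper itself gives no proof of this proposition, citing \cite{BG} and \cite{T}, so there is nothing internal to compare against; but both of your arguments are sound. The first route, via Proposition \ref{em}, is the standard one implicit in the cited sources: a finitely generated subgroup $K\leq H\leq G$ is already a subgroup of $G$ and hence trivially approximated by subgroups of $G$ (the identity embedding into a one-factor Cartesian power suffices). Your second, self-contained argument is in some ways preferable in the context of this paper, since it works directly from the Galois correspondence of Section 2.1 and does not rely on the unproved approximation criterion: the inclusion $\iota\colon H\hookrightarrow G$ preserves kernels, so $T'_H$ maps into $T'_G$, giving $T''_G\subseteq T''_H$ for every $T\subseteq F$, and combined with the tautological inclusion $T\subseteq T''_G$ this shows every $H$-closed set is $G$-closed, i.e.\ $H\preceq G$. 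Your closing observation that injectivity of $\iota$ is genuinely used (kernel preservation fails for quotient maps) is accurate and a useful sanity check.
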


It is clear that if for groups $G_1$ and $G_2$ the conditions $G_1\preceq G_2$ and $G_2\preceq G_1$ hold, then they are geometrically equivalent.
\subsection {$\pi$-completion of a $\pi$-torsion-free nilpotent group}
 Let $\pi$ be a (non empty) set of primes. An integer is said to be a $\pi$-number if it is a product of powers of primes from $\pi$. For a subgroup $H$ of a group $G$, the $\pi$-isolator $I_\pi{(H)}$ of $H$ in $G$ is defined as follows:
  $$I_\pi{(H)}=\{ g\in G|g^n\in H\:\mbox{for some}\: \pi-\mbox{number}\: n=n(g) \}$$

 A subgroup A is said to be $\pi$-isolated if $I_\pi{(A)}=A$.

  If $\pi$ is the set of all prime numbers, the $\pi$-isolator of $H$ in $G$ is denoted by $I(H)$ and is called the isolator of $H$ in $G$.
  \begin{proposition}[see \cite{Kh1}, \cite{LR}]\label{iso}
  If $H$ is a subgroup of a nilpotent group $G$ and $\pi$ is a set of prime numbers, then $I_\pi{(H)}$ is a subgroup.
  \end{proposition}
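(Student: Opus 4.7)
\medskip\noindent\textbf{Proof plan.} I argue by induction on the nilpotency class $c$ of $G$. Closure of $I_\pi(H)$ under inversion is immediate, since $g^n\in H$ implies $(g^{-1})^n=(g^n)^{-1}\in H$; the content lies in closure under products.

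The base case $c=1$ is the abelian setting and is direct: for $g,h\in I_\pi(H)$ with $g^m,h^n\in H$ ($\pi$-numbers $m,n$), one has $(gh)^{mn}=g^{mn}h^{mn}\in H$, so $gh\in I_\pi(H)$.

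For the inductive step I would reduce modulo $N=\gamma_c(G)$, which is central and abelian. The quotient $\bar G=G/N$ has class $c-1$, and by the inductive hypothesis $I_\pi(\bar H)$ is a subgroup of $\bar G$, where $\bar H=HN/N$. From $g,h\in I_\pi(H)$ one gets $\overline{gh}\in I_\pi(\bar H)$, so $(gh)^k\in HN$ for some $\pi$-number $k$. Writing $(gh)^k=h_0z$ with $h_0\in H$ and $z\in N$ central, the identity $(gh)^{k\ell}=h_0^\ell z^\ell$ reduces the problem to producing a $\pi$-number $\ell$ with $z^\ell\in H$.

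The main obstacle is exactly this last reduction: a bare appeal to the inductive hypothesis to put $z=h_0^{-1}(gh)^k$ into $I_\pi(H)$ is circular, because $z$ is itself constructed from the product one is trying to control. The way around this is to use the Hall-Petresco collection formula
$$(gh)^k=g^k h^k\,\tau_2(g,h)^{\binom{k}{2}}\tau_3(g,h)^{\binom{k}{3}}\cdots\tau_c(g,h)^{\binom{k}{c}},\qquad \tau_i\in\gamma_i(\langle g,h\rangle),$$
and to track $z$ in terms of the concrete Hall commutators $\tau_i$. Taking $k$ to be a $\pi$-number divisible by $mn$ puts $g^k$ and $h^k$ into $H$; a layered induction down the lower central series $\gamma_2(G)\ge\cdots\ge\gamma_c(G)$ then absorbs the successive commutator factors $\tau_i^{\binom{k}{i}}$, at the cost of replacing $k$ by a suitable $\pi$-multiple (the top factor $\tau_c\in N$ is central and abelian, which starts the absorption, and the lower factors are handled inductively inside the central quotients $\gamma_i(G)/\gamma_{i+1}(G)$). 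With $k$ so enlarged, $(gh)^k\in H$ directly, and the induction closes.
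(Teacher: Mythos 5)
There is a genuine gap, and it sits exactly at the point you flagged. Your outer induction on the class and the reduction modulo $N=\gamma_c(G)$ are fine, and you correctly observe that putting $z=h_0^{-1}(gh)^k$ into $I_\pi(H)$ by fiat would be circular. But the proposed repair does not break the circle: in the Hall--Petresco identity the factors $\tau_i(g,h)$ are fixed elements of $\gamma_i(\langle g,h\rangle)$ depending only on $g$ and $h$, so enlarging $k$ only changes the exponents $\binom{k}{i}$; to conclude $\tau_i(g,h)^{\binom{k}{i}}\in H$ you need to know beforehand that each $\tau_i(g,h)$ has a $\pi$-power in $H$, i.e.\ that $\tau_i(g,h)\in I_\pi(H)$. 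That is essentially the statement being proved (if $I_\pi(H)$ is a subgroup it contains $\langle g,h\rangle$, hence all the $\tau_i$), and neither the inductive hypothesis for class $c-1$ nor the remark that $\tau_c$ is central gives it: centrality says nothing about membership of any power of $\tau_c$ in $H$, and the class-$(c-1)$ hypothesis only yields information modulo $N$, not about these specific commutators. So ``absorption'' is asserted, not proved.

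The missing ingredient is a commutator-calculus lemma that converts powers of $[g,h]$ (and of higher commutators) into commutators of the elements $g^m,h^n$, which \emph{do} lie in $H$ because $H$ is a subgroup: for instance $[g,h]^{mn}\equiv[g^m,h^n]\pmod{\gamma_3(\langle g,h\rangle)}$, and then one inducts down the lower central series of $K=\langle g,h\rangle$ with careful bookkeeping of the correction terms. This is exactly the content of P.~Hall's result that the paper records as Lemma~\ref{Rob}: if the generators of a finitely generated nilpotent group have $\pi$-powers in a subgroup, then every element does. Note that the paper itself gives no proof of Proposition~\ref{iso} --- it cites \cite{Kh1} and \cite{LR} --- and the standard argument there is short once Lemma~\ref{Rob} is available: given $g,h\in I_\pi(H)$, apply Lemma~\ref{Rob} to $K=\langle g,h\rangle$ and $H\cap K$ to conclude that $gh$ and $g^{-1}$ have $\pi$-powers in $H$. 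If you want a self-contained proof in your style, you should either prove that two-generator case of Lemma~\ref{Rob} (which is where the collection formula and the identity above genuinely enter), or strengthen your induction to prove simultaneously that commutators of elements of $I_\pi(H)$ again lie in $I_\pi(H)$; as written, that step is missing.
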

 A group $G$ is $\pi$-divisible if for every  $\pi$-number $n$, every element $g\in G$ has an $n$th root of $g$ in $G$, i.e., there exists an element $h\in G$ such that $h^n=g$. A group is $\pi$-torsion-free if it has no non-identity element whose order is  $\pi$-number, i.e., $I_\pi{(1)}=1$.  A $\pi$-divisible $\pi$-torsion-free group is said to be $\pi $-complete.

 Let $\mathbb{Q}_\pi$ be the ring of all rational numbers whose denominators are $\pi$-numbers and $G$ be a $\pi$-divisible $\pi$-torsion-free group. An unary operation on $G$ can be defined by taking powers in $\mathbb{Q}_\pi$ satisfying the laws $(x^r)^s=x^{rs}$ for all $r,s\in \mathbb{Q}_\pi$. The group $G$ with this additional unary operation is called a $\mathbb{Q}_\pi$-powered group.
   \begin{theorem}[\cite{Kh1}]\label{Kh1}
   (a)\!  Every $\pi$-torsion-free nilpotent group $G$ of class $c$ can be embedded as a subgroup in the $\mathbb{Q}_\pi$-powered nilpotent group $\widehat{G}^\pi$ of the same class $c$ such that $\widehat{G}^\pi=I_{\pi}(G)$, i.e., for any element $g\in \widehat{G}^\pi$ there exists a $\pi$-number $m$ such that $g^m\in G$.

   (b) \!The group $\widehat{G}^\pi$  is unique up to isomorphism; moreover every isomorphism $\varphi: G \rightarrow  G^\prime $  extends to an isomorphism of $\widehat{G}^\pi$  onto $\widehat{G'}^\pi$.
    \end{theorem}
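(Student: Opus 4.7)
The plan is to prove part (a) by induction on the nilpotency class $c$ and then deduce the uniqueness statements in (b) from uniqueness of $\pi$-th roots in $\pi$-torsion-free nilpotent groups.

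For the base case $c=1$, the group $G$ is abelian and I would set $\widehat{G}^\pi := G \otimes_{\mathbb{Z}} \mathbb{Q}_\pi$. Since $G$ is $\pi$-torsion-free, the natural map $G \to \widehat{G}^\pi$ is injective; every element is of the form $g \otimes (1/n)$ with $n$ a $\pi$-number, and $(g \otimes 1/n)^n = g \otimes 1 \in G$, confirming $\widehat{G}^\pi = I_\pi(G)$. The $\mathbb{Q}_\pi$-module structure supplies the required unary power operation.

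For the induction step, let $G$ be $\pi$-torsion-free nilpotent of class $c$ and set $Z := I_\pi(\gamma_c(G))$. By Proposition \ref{iso}, $Z$ is a subgroup; it is central, because uniqueness of $\pi$-th roots in the $\pi$-torsion-free nilpotent group $G$ forces every element of the $\pi$-isolator of a central subgroup to commute with $G$, and it is $\pi$-isolated by definition, so $G/Z$ is $\pi$-torsion-free. Since $Z \supseteq \gamma_c(G)$, the quotient $G/Z$ has class $\leq c-1$; by the inductive hypothesis it embeds in a $\mathbb{Q}_\pi$-powered nilpotent group $\widehat{G/Z}^\pi$, and the base case gives $\widehat{Z}^\pi$. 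The central extension $1 \to Z \to G \to G/Z \to 1$ is classified by a $2$-cocycle $f \in Z^2(G/Z, Z)$, and I would extend $f$ to $\widehat{f} \in Z^2(\widehat{G/Z}^\pi, \widehat{Z}^\pi)$ using that every element of $\widehat{G/Z}^\pi$ has a $\pi$-power in $G/Z$ and that $\widehat{Z}^\pi$ admits unique $\pi$-th roots. The resulting central extension $\widehat{G}^\pi$ is $\pi$-complete, nilpotent of class $c$, and contains $G$ as a $\pi$-isolated subgroup.

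For part (b), uniqueness of $\widehat{G}^\pi$ and extension of an isomorphism $\varphi: G \to G'$ both follow from uniqueness of $\pi$-th roots: I would extend $\varphi$ inductively along the layers used in the construction, noting that at each layer the lift to $\widehat{G/Z}^\pi$ and $\widehat{Z}^\pi$ is uniquely determined by compatibility with root extraction. Any two candidates for $\widehat{G}^\pi$ are then related by a unique isomorphism extending $\mathrm{id}_G$.

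The main obstacle I anticipate is the cocycle extension step. One must verify that $\widehat{f}$ is well-defined independently of the $\pi$-power representatives chosen, is a genuine $2$-cocycle, and produces a group whose nilpotency class is exactly $c$ rather than larger. This rests on the delicate interplay between $\pi$-divisibility, $\pi$-torsion-freeness, and commutator identities in nilpotent groups, and a careful Hall-type commutator calculus is needed to control how commutators of $\pi$-th roots relate to $\pi$-th roots of commutators.
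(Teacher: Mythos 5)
This statement is not proved in the paper at all: it is quoted from Khukhro's book \cite{Kh1} (it is the $\pi$-analogue of Mal'tsev's embedding theorem), so there is no in-paper argument to compare yours against; I can only assess your sketch on its own terms. Your overall route --- induction on the class, base case $G\otimes_{\mathbb{Z}}\mathbb{Q}_\pi$, induction step through the central $\pi$-isolated subgroup $Z=I_\pi(\gamma_c(G))$ and a central extension --- is a legitimate classical strategy (it is essentially how $\pi$-localization of nilpotent groups is built in Warfield's book \cite{W} and in Hilton--Mislin--Roitberg), and the surrounding details you mention (centrality of $Z$ via the root-commutation property, $G/Z$ being $\pi$-torsion-free of class $\le c-1$, the extension being $\pi$-complete of class $c$, the isolator property) all check out.

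The genuine gap is exactly the step you flag and then wave through: extending the classifying cocycle $f\in Z^2(G/Z,Z)$ to $\widehat f\in Z^2(\widehat{G/Z}^\pi,\widehat Z^\pi)$. A $2$-cocycle is not bilinear, so there is no well-defined cochain-level recipe of the form ``take $\pi$-power representatives in $G/Z$ and divide by the exponents in $\widehat Z^\pi$''; different choices of representatives change $f$ by amounts you cannot control without already knowing the answer. What is actually needed here is the cohomological input that the inclusion $Q\hookrightarrow \widehat Q^\pi$ induces an isomorphism $H^2(\widehat Q^\pi;A)\to H^2(Q;A)$ for every $\mathbb{Q}_\pi$-module $A$ with trivial action (equivalently, that $\pi$-localization of a nilpotent group $\pi$-localizes its low-dimensional homology); that is a substantive theorem with its own inductive/spectral-sequence proof, and without stating and proving it your induction step does not close. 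The literature avoids or supplies this in different ways: Khukhro/Hall work with Mal'tsev bases and nilpotent $R$-powered groups, Mal'tsev used the Lie correspondence (which, as this paper notes, is unavailable over $\mathbb{Q}_\pi$), and Warfield proves the cohomological statement. A smaller but real gap is in (b): uniqueness of $\pi$-th roots makes the map $g^{1/n}\mapsto$ (unique root of $\varphi(g)$) well defined on elements, but its multiplicativity is not automatic and requires a commutator-calculus argument (Hall--Petrescu type identities or reduction to finitely generated subgroups), which your sketch does not supply.
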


The group $\widehat{G}^\pi$ is the minimal $\pi$-complete group containing $G$ and it is called the $\pi$-completion of $G$.

 From here on we consider $\mathbb{Q}_\pi$-powered group $\widehat{G}^\pi$ as a $\pi$-torsion-free $\pi$-divisible abstract group, i.e., as the group with respect to only the usual group operations. In the special case, where $\pi$ consists of all primes, then $\mathbb{Q}_\pi=\mathbb{Q}$, the property of being $\pi$-complete coincides with the property of being complete. In this case, the $\pi$-completion $\widehat{G}^\pi$ of the group $G$ coincides with the Mal'tsev completion $\widehat{G}$ of $G$ (see \cite{M}).
  \begin{definition}
  Let $G $  be a group and $H\leq G$  be a subgroup of $G$. Denote by $H^m$ the subgroup of $G$ generated by $m$th powers of elements from $H$.
  \end{definition}
The subgroup $H^m$ is a verbal subgroup of $G$ defined by the word $x^m$.

  We also need the following lemma, given by A. Mal'tsev \cite {M}. The initial proof of this lemma is based on the theory of Lie groups. The proof of this lemma in the framework of group theory (without Lie context) was given by A. Klyachko (Lemma 2.2, in \cite{K}).  See also the proof of Lemma 1.2 in \cite{BG}.

  \begin{lemma}[\cite{M}]\label{Mal}
Let $G$ be a nilpotent group of nilpotency class $k$. Fix a natural number $n$ and denote by $G^{n^k}$ a subgroup of $G$ generated by all elements of the form $g^{n^k}, g\in G$. Then the equation $x^n=h$ has a solution in the group $G$ for each $h\in G^{n^k}$.
\end{lemma}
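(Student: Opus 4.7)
The plan is to argue by induction on the nilpotency class $k$. The base case $k=1$ is trivial: an abelian group has $G^n = \{g^n : g \in G\}$, so every element of $G^n$ is literally an $n$-th power. For the inductive step, assume the lemma for nilpotent groups of class smaller than $k$, and let $G$ have class $k$. Set $Z = \gamma_k(G) \subseteq Z(G)$, so that $\bar G = G/Z$ has class $k-1$. Given $h \in G^{n^k}$, its image $\bar h$ lies in $\bar G^{n^k} \subseteq \bar G^{n^{k-1}}$ (since $g^{n^k} = (g^n)^{n^{k-1}} \in G^{n^{k-1}}$ for each $g$). The inductive hypothesis applied to $\bar G$ with exponent $n^{k-1}$ yields $y \in G$ with $\bar h = \bar y^n$, that is, $h = y^n z$ with $z \in Z$. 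Because $Z$ is central and abelian, it suffices to produce $w \in Z$ with $w^n = z$, for then $h = y^n w^n = (yw)^n$.

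The heart of the argument is thus to show $z \in Z^n$. Writing $h = g_1^{n^k \varepsilon_1} \cdots g_m^{n^k \varepsilon_m}$ with $\varepsilon_i \in \{\pm 1\}$, I would apply the Hall--Petresco collection formula, which in class-$k$ nilpotent groups takes the form
\[
a^N b^N = (ab)^N \prod_{i=2}^{k} \tau_i(a,b)^{\binom{N}{i}}, \qquad \tau_i(a,b) \in \gamma_i(\langle a,b\rangle).
\]
Iterating with $N = n^k$ rewrites $h$ as the $n$-th power $((g_1 \cdots g_m)^{n^{k-1}})^n$ times commutator corrections $\tau_i^{\binom{n^k}{i}}$ for $2 \le i \le k$. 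A $p$-adic valuation computation yields $n \mid \binom{n^k}{i}$ throughout this range: for each prime $p \mid n$, the identity $\binom{n^k}{i} = \tfrac{n^k}{i}\binom{n^k-1}{i-1}$ gives $v_p(\binom{n^k}{i}) \ge k\,v_p(n) - v_p(i)$, while $v_p(i) \le \log_p k \le (k-1)\,v_p(n)$. Hence every correction term has the form $\tau_i^{n m_i}$.

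The main obstacle is that the $\tau_i$ neither commute among themselves nor with $y$, so one cannot simply gather them into a single $n$-th power. My plan to handle this is a secondary induction descending along the lower central series: the outermost correction $\tau_k^{n m_k}$ lies in the central subgroup $Z$, is literally the $n$-th power of the central element $\tau_k^{m_k}$, and can be absorbed into $y$ (replacing $y$ by $y\tau_k^{m_k}$) without disturbing the other terms. Modulo $\gamma_k$ the remaining problem reduces to one for the class-$(k-1)$ group $\bar G$, and the image of $\tau_{k-1}^{n m_{k-1}}$ is central there, allowing the same manoeuvre. Peeling off $\tau_k, \tau_{k-1}, \ldots, \tau_2$ in turn produces the desired central $w \in Z$ with $z = w^n$, which completes the induction.
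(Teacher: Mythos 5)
The paper itself does not prove this lemma: it is quoted from Mal'tsev \cite{M} (whose original argument is Lie-theoretic), with references to Klyachko (Lemma 2.2 in \cite{K}) and to Lemma 1.2 of \cite{BG} for group-theoretic proofs. Your skeleton -- induction on the class, Hall--Petrescu collection, divisibility of $\binom{n^k}{i}$ -- is exactly the skeleton of those group-theoretic proofs, and your reduction to ``$z\in Z^n$'' and your verification that $n\mid\binom{n^k}{i}$ for $2\le i\le k$ are correct. However, the decisive step, the ``secondary induction descending along the lower central series,'' is asserted rather than proved, and as described it does not close. Two problems. First, once you pass to $\bar G=G/\gamma_k(G)$ to absorb the corrections $\tau_{k-1},\dots,\tau_2$, all you can conclude at the end is $h\equiv(\text{something})^n\bmod\gamma_k(G)$ -- which is precisely what the primary induction already gave you; the whole difficulty is to control the error \emph{in} $\gamma_k(G)$, so the manipulations must be carried out in $G$ itself with the central leftovers tracked explicitly. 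Second, when you do carry them out in $G$, absorbing a non-central correction is not free: for $c\in\gamma_{k-1}(G)$ one has $y^nc^n=(yc)^n[y,c]^{\binom n2}$, and pushing $c^{nm}$ past other factors produces further commutator terms with exponents such as $m\binom n2$ and $\binom{nm}{j}$. These new exponents are \emph{not} divisible by $n$ merely because $n\mid\binom{n^k}{i}$ (take $n$ even), so the single factor of $n$ you secured is spent at the first absorption and nothing guarantees the next layer of errors is again an $n$-th power.

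What rescues the argument -- and what the cited proofs actually do -- is the graded divisibility $n^{\,k-i+1}\mid\binom{n^k}{i}$ (your own valuation estimate gives it, since $v_p(i)\le i-1\le(i-1)v_p(n)$), combined with the observation that every reordering or absorption pushes terms deeper in the lower central series exactly as fast as the exponent reserve shrinks. One then runs the induction on an invariant of the form ``every factor lying in $\gamma_d(G)$ carries an exponent divisible by $n^{\,k-d+1}$,'' checks it is preserved by each collection step, and only at the centre does the reserve drop to a single $n$, which is all that is needed there. Your sketch works as written for $k=2$ (all corrections are already central) and can be patched by hand for $k=3$, but for general $k$ this bookkeeping is the substance of the lemma, not a routine finish; as it stands the proposal has a genuine gap at that point. (A small cosmetic remark: the signs $\varepsilon_i$ are unnecessary, since $(g^{n^k})^{-1}=(g^{-1})^{n^k}$.)
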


\section{ $\pi$-torsion-free nilpotent groups}

\subsection{Geometric equivalence of $\pi$-torsion-free nilpotent groups to their $\pi$-completions}
 Let $\pi$ be a set of primes. Now we consider some examples of $\pi$-torsion-free nilpotent groups for which we can describe their $\pi$-completions.

  \begin{example}\label{ex_1} Let $H$ be a finitely generated $\pi$-torsion-free abelian group. We wish to describe the structure of the group $\widehat{H}^\pi$.

 It is well known (see, e.g., \cite{Kr}, Ch.6, Sec.20) that the group $H$ is the finite direct product of cyclic $\pi$-torsion-free groups, i.e., there exist integers $s,k\geq 0$ and prime numbers $p_1\leq\dots\leq p_k$ together with positive integers $\alpha_1,\dots \alpha_r$ such that
\begin{equation}\label{Ab}
H\cong \mathbb{Z}^s\times \mathbb{Z}/p_1^{\alpha_1}\mathbb{Z}\times\dots\times \mathbb{Z}/p_k^{\alpha_k}\mathbb{Z},
\end{equation}
where $p_i,\, i=1,\dots,k$, are not $\pi$-numbers.

 Denote by $\mathbb{Q}^{+}_\pi$ the additive group of the ring $\mathbb{Q}_\pi$. It is clear, that $\pi$-completion $\widehat{\mathbb{Z}}^\pi$ of $\mathbb{Z}$ is equal to $\mathbb{Q}^{+}_\pi$. If $G$ is a $p$-group, where $p\notin \pi$, then for every element $g\in G$ and $m\in\pi$ there exists an $m$th root of $g$. In fact, the mapping $g\rightarrow g^m$ is an automorphism of the cyclic subgroup $<g>$. Therefore, every group $\mathbb{Z}/p_i^{\alpha_i}\mathbb{Z}, p_i\notin\pi$, is $\pi$-complete. It is easy to check that if a group $A$ is $\pi$-torsion-free nilpotent such that $A=A_1\times A_2$, then $\widehat{A}^\pi=\widehat{A_1}^\pi\times \widehat{A_2}^\pi$. As a consequence, we obtain
 $$ \widehat{H}^\pi \cong (\mathbb{Q}^{+}_\pi)^s\times \mathbb{Z}/p_1^{\alpha_1}\mathbb{Z}\times\dots\times \mathbb{Z}/p_k^{\alpha_k}\mathbb{Z} $$
 By Proposition \ref{cr}, that $H$ is geometrically equivalent to $\widehat{H}^\pi$.
  \end{example}
  Denote by $T$ the torsion group of $H$ and by $\widehat{T}^\pi$ its $\pi$-completion. It is clear that $\widehat{T}^\pi=T$, i.e., the group  $T$ is $\pi$-complete.

 Now we wish to select another class of $\pi$-complete nilpotent groups.

\begin{example}\label{ex_2}
Let $G$ be a $\pi$-torsion-free nilpotent group such that the quotient group $G/G^{\prime}$ is finite. We will show that the group $G$ is $\pi$-complete.

Since the group $G$ is nilpotent and $G/G^{\prime}$ is finite, $G$ is finite (see \cite{W}, Sec.3, p.9). Therefore, the group $G$ is the direct product of its Sylow subgroups:
$$ G=G(p_1)\times G(p_2)\times\dots\times G(p_k).$$
Since $G(p_i)$ is a $\pi$-torsion-free nilpotent group, $p_i\notin \pi$. As a consequence, the group $G(p_i)$ is $\pi$-complete.
Hence, the group $G$, as the direct product of $\pi$-complete groups, is $\pi$-complete too.
\end{example}
 Our goal is to provide a criterion for nilpotent $\pi$-torsion-free groups to be geometrically equivalent to their $\pi$-completions. For that purpose we need the result from (\cite{LR} Ch. 2, Sec. 3) that was established by A. Mal'tsev (\cite{M}) in the case when $\pi$ is the set of all prime numbers.
\begin{lemma}\label{Rob}
Let $G$ be a finitely generated nilpotent group, $g_1,\dots,g_n$ be a set of  generators of $G$, and $H$ be a subgroup of $G$. Suppose that $g_i^{r_i}\in H$ for some positive $\pi$-numbers $r_i,\;i=1,\dots,n$. Then each element of $G$ has a positive  $\pi$-power in $H$ and $\mid G:H\mid$ is a finite $\pi$-number.
\end{lemma}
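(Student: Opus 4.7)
The plan is to induct on the nilpotency class $c$ of $G$. For the base case $c=1$, where $G$ is abelian, every $g\in G$ can be written as $g=g_1^{a_1}\cdots g_n^{a_n}$, so setting $r=r_1 r_2\cdots r_n$ (a $\pi$-number) yields $g^r=\prod(g_i^{r_i})^{a_i r/r_i}\in H$. Moreover $G/H$ is a quotient of $\bigoplus \mathbb{Z}/r_i\mathbb{Z}$, so its order divides $r_1 r_2\cdots r_n$ and is a $\pi$-number.

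For the inductive step, I would pass to $\bar G=G/\gamma_c(G)$, which is nilpotent of class at most $c-1$, generated by the images $\bar g_i$, and satisfies $\bar g_i^{r_i}\in\bar H:=H\gamma_c(G)/\gamma_c(G)$. The induction hypothesis then gives, for each $g\in G$, a $\pi$-number $m$ with $g^m=hz$ for some $h\in H$ and $z\in\gamma_c(G)$, and also gives that $|\bar G:\bar H|$ is a finite $\pi$-number.

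The crux is to show that every $z\in\gamma_c(G)$ has a $\pi$-power in $H\cap\gamma_c(G)$. The abelian group $\gamma_c(G)$ is generated by the weight-$c$ iterated commutators of the $g_i$, and because $\gamma_c(G)\leq Z(G)$, the standard commutator identities collapse to multilinearity on weight-$c$ commutators: $[g_{i_1}^{r_{i_1}},g_{i_2}^{r_{i_2}},\dots,g_{i_c}^{r_{i_c}}]=[g_{i_1},g_{i_2},\dots,g_{i_c}]^{r_{i_1}r_{i_2}\cdots r_{i_c}}$. The left-hand side is a commutator of elements of $H$ and so lies in $H\cap\gamma_c(G)$, hence each generator of $\gamma_c(G)$ has a $\pi$-power in $H\cap\gamma_c(G)$. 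Applying the abelian base case to $\gamma_c(G)$ with its subgroup $H\cap\gamma_c(G)$ delivers both the required $\pi$-power property and that $|\gamma_c(G):H\cap\gamma_c(G)|$ is a finite $\pi$-number.

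To finish, centrality of $\gamma_c(G)$ gives $(hz)^s=h^s z^s$, so if $z^s\in H$ for a $\pi$-number $s$, then $g^{ms}\in H$, which is the first assertion. For the index, the second isomorphism theorem (using centrality of $\gamma_c(G)$) yields $|G:H|=|\bar G:\bar H|\cdot|\gamma_c(G):H\cap\gamma_c(G)|$, a product of two finite $\pi$-numbers. The main obstacle I expect is the clean use of the commutator multilinearity identity; quotienting by $\gamma_c(G)$ rather than by an arbitrary central subgroup is what makes this work, since $\gamma_c(G)$ is abelian, central, and equipped with an explicit generating set of weight-$c$ commutators in the $g_i$'s on which the multilinearity applies.
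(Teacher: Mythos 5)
Your proof is correct. Note, however, that the paper does not prove this lemma at all: it is quoted as a known result from Lennox--Robinson (\emph{The theory of infinite soluble groups}, Ch.~2, Sec.~3), with the case where $\pi$ is the set of all primes attributed to Mal'tsev, so there is no in-paper argument to compare against. Your induction on the nilpotency class is essentially the standard textbook proof of this isolator lemma, and each step checks out: the abelian base case is immediate; in the inductive step the quotient $G/\gamma_c(G)$ handles everything modulo $\gamma_c(G)$; and the key point, that each $z\in\gamma_c(G)$ has a $\pi$-power in $H\cap\gamma_c(G)$, follows because $\gamma_c(G)$ is central, is generated by the finitely many weight-$c$ simple commutators $[g_{i_1},\dots,g_{i_c}]$ (this uses $\gamma_{c+1}(G)=1$, which also justifies the multilinearity identity $[g_{i_1}^{r_{i_1}},\dots,g_{i_c}^{r_{i_c}}]=[g_{i_1},\dots,g_{i_c}]^{r_{i_1}\cdots r_{i_c}}$ you invoke), so the abelian case applies to $\gamma_c(G)$ with subgroup $H\cap\gamma_c(G)$. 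The final assembly is also fine: centrality gives $(hz)^s=h^sz^s$, and the index factorization $\lvert G:H\rvert=\lvert G:H\gamma_c(G)\rvert\cdot\lvert H\gamma_c(G):H\rvert$ together with $\lvert H\gamma_c(G):H\rvert=\lvert\gamma_c(G):H\cap\gamma_c(G)\rvert$ (valid since $\gamma_c(G)$ is normal) yields a product of finite $\pi$-numbers. So your proposal supplies a correct, self-contained proof of a statement the paper leaves to the literature.
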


We shell need the following lemma
\begin{lemma}\label{Li}
 Let $G$ be a nilpotent $\pi$-torsion-free group and $\widehat{G}^\pi$  be its  $\pi$-completion. Then for any finitely generated
 subgroup $H$ of $\widehat{G}^\pi$ there exists a $\pi$-number $r=r(G)$ such that $H^{r}\leq G$.
 \end{lemma}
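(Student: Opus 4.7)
The plan is to reduce to a finitely generated subgroup of $G$ and then push an induction through the nilpotency class. Write $H=\langle h_1,\dots,h_n\rangle$. Since $\widehat{G}^\pi=I_{\pi}(G)$ by Theorem~\ref{Kh1}(a), each $h_i$ satisfies $h_i^{m_i}\in G$ for some $\pi$-number $m_i$; putting $m=m_1\cdots m_n$, a single $\pi$-number works: $h_i^m\in G$ for every $i$. Form the finitely generated subgroup $G_0=\langle h_1^m,\dots,h_n^m\rangle\le G$. Then $G_0\le H$, and applying Lemma~\ref{Rob} to the finitely generated nilpotent group $H$ with generators $h_1,\dots,h_n$ and subgroup $G_0$ yields that $t:=|H:G_0|$ is a finite $\pi$-number. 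Proving $H^r\le G_0$ therefore suffices, since $G_0\le G$.

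I would isolate the following general claim and prove it by induction on the nilpotency class $c$ of $K$: \emph{if $K$ is a finitely generated nilpotent group and $L\le K$ has index a $\pi$-number, then $K^r\le L$ for some $\pi$-number $r$.} For $c=1$, the group $K$ is abelian and every $k\in K$ satisfies $k^{|K:L|}\in L$, so $K^{|K:L|}\le L$. For $c>1$, let $Z=Z(K)$. The image $LZ/Z$ in $K/Z$ has index $|K:LZ|$ dividing $|K:L|$, hence a $\pi$-number, so the inductive hypothesis applied to the class-$(c{-}1)$ group $K/Z$ supplies a $\pi$-number $r_1$ with $(K/Z)^{r_1}\le LZ/Z$, i.e.\ $K^{r_1}\le LZ$. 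Separately, $Z/(Z\cap L)$ embeds in $K/L$, so $t':=|Z:Z\cap L|$ is a $\pi$-number and $Z^{t'}\le L$ because $Z$ is abelian.

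To combine these, take any $k\in K$ and write $k^{r_1}=\ell z$ with $\ell\in L$, $z\in Z$; since $z$ is central,
$$k^{r_1 t'}=(\ell z)^{t'}=\ell^{t'}z^{t'}\in L.$$
Hence every generator of $K^{r_1 t'}$ lies in $L$, so $K^{r_1 t'}\le L$. Setting $r=r_1 t'$ completes the induction. Applying the claim to $K=H$ and $L=G_0$ (with the $\pi$-index $t$ produced in the reduction step) furnishes a $\pi$-number $r$ such that $H^r\le G_0\le G$, as desired.

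The key obstacle is the inductive step of the general claim, where $L$ need not be normal in $K$; the essential trick is that even without normality one can control $K$ modulo the \emph{center} by the inductive hypothesis and then absorb the residual central factor via $Z^{t'}\le L$. Centrality of $z$ is precisely what allows distributing the exponent $t'$ through the product $\ell z$ and thereby collapsing the two bounds into a single uniform $\pi$-number $r$.
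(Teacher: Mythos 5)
Your argument is correct, and its second half takes a genuinely different route from the paper's. The reduction step is the same: both you and the paper pass from $H=\langle h_1,\dots,h_n\rangle$ to the subgroup generated by $\pi$-th powers of the generators (your $G_0$, the paper's $D$), which lies in $G$, and both invoke Lemma \ref{Rob} to see that $|H:G_0|$ is a finite $\pi$-number. The difference lies in how one gets from ``$\pi$-number index'' to ``some $\pi$-power of $H$ lands in the subgroup'': the paper uses the fact that every subgroup of a nilpotent group is subnormal, descends a normal series $D=D_1\leq D_2\leq\cdots\leq D_n=H$, and at each stage uses that in a finite quotient of order $q$ every element satisfies $x^q=1$, multiplying the resulting $\pi$-indices; you instead prove a standalone claim by induction on the nilpotency class, passing to $K/Z(K)$ and absorbing the central defect via $Z^{t'}\leq L$, which neatly sidesteps the fact that $L$ need not be normal. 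Both constructions yield $r$ as a product of $\pi$-numbers depending on $H$ (the lemma's ``$r=r(G)$'' is evidently a slip for $r=r(H)$, as the paper's own proof and the use of the lemma in Theorem \ref{criterion} show). One small point to tighten in your write-up: the injection of coset spaces of $Z/(Z\cap L)$ into the cosets of $L$ in $K$ only gives $t'\leq |K:L|$, which shows finiteness but not that $t'$ is a $\pi$-number; for that, note $|Z:Z\cap L|=|LZ:L|$, which divides $|K:L|=|K:LZ|\cdot|LZ:L|$, so $t'$ divides a $\pi$-number and is therefore a $\pi$-number. With that one-line fix your induction is complete, and it is arguably more self-contained than the paper's version, since it does not presuppose subnormality of subgroups of nilpotent groups; the paper's version, in turn, is shorter once subnormality is granted.
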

 \begin{proof}
 Let the group $H$ be generated by $h_1,\dots,h_n$. Then there are $\pi$-numbers $r_i$ such that $h_i^{r_i}\in G$. Denote by $D$ the subgroup of $H$ generated by $h_1^{r_1},\dots,h_n^{r_n}$. The group $D$ is a subnormal subgroup of $H$, i.e., there is a normal series
 $$D=D_1\leq D_2\leq\dots \leq D_{n-1}\leq D_n=H$$
 from $D$ to $H$. By Lemma \ref{Rob}, the index $\mid H:D\mid$ is a $\pi$-number. Therefore, each factor in this series has an index that is a $\pi$-number. If $r_{n-1}$ is the order of the quotient group $H/D_{n-1}$, then $H^{r_{n-1}}\leq D_{n-1}$.
  By induction, it is easy to prove that there exists a sequence of $\pi$-numbers $r_{1},\dots,r_{n-1}$ such that
  $H^{r}\leq D$, where $r=r_{1}\cdots r_{n-1}$. Since $ D $ is a subgroup of $ G $, this completes the proof.
   \end{proof}

Now we derive a criterion of geometric equivalence of a $\pi$-torsion-free nilpotent group to its $\pi$-completion.
 \begin {theorem}\label{criterion}
 Let $G$ be a  $\pi$-torsion-free nilpotent group and $\widehat{G}^\pi$  be its  $\pi$-completion. Then groups $G$ and $\widehat{G}^\pi$ are geometrically equivalent if and only if the group $G$  is geometrically equivalent to subgroups $G^m$  for every $\pi$-number $m$.
\end{theorem}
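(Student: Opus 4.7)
The plan is to establish each implication via Proposition \ref{em}, reducing the geometric equivalence to approximation statements for finitely generated subgroups.

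For the forward direction, since $G^m \leq G$, Proposition \ref{BG_1} already gives $G^m \preceq G$, so it suffices to show $G \preceq G^m$. Given a finitely generated subgroup $K = \langle k_1, \ldots, k_n \rangle$ of $G$ and a nontrivial $k \in K$, I would use $\pi$-divisibility of $\widehat{G}^\pi$ to choose $\tilde{k}_i \in \widehat{G}^\pi$ with $\tilde{k}_i^m = k_i$, and form $\tilde{K} = \langle \tilde{k}_1, \ldots, \tilde{k}_n \rangle$, observing that $K \leq \tilde{K}^m$. The assumption that $G$ is geometrically equivalent to $\widehat{G}^\pi$ together with Proposition \ref{cr} then supplies a homomorphism $\varphi : \tilde{K} \to M \leq G$ with $\varphi(k) \neq 1$; its restriction to $K$ has image in $M^m \leq G^m$, which is exactly the approximation needed.

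For the converse, $G \preceq \widehat{G}^\pi$ is automatic, so the task is to prove $\widehat{G}^\pi \preceq G$. Take a finitely generated $H \leq \widehat{G}^\pi$, a nontrivial $h \in H$, and let $c$ denote the nilpotency class of $G$. By Lemma \ref{Li} there is a $\pi$-number $r$ with $H^r \leq G$, and $h^r \in H^r \setminus \{1\}$ since $\widehat{G}^\pi$ is $\pi$-torsion-free. Applying the hypothesis with exponent $m = r^c$ to the finitely generated subgroup $H^r \leq G$, Proposition \ref{cr} produces a homomorphism $\varphi : H^r \to L$ with $L \leq G^{r^c}$ and $\varphi(h^r) \neq 1$.

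The crux, and the main obstacle, is to extend $\varphi$ to a homomorphism $\Psi : H \to G$ that still separates $h$. By Lemma \ref{Rob}, $\mid H : H^r \mid$ is a $\pi$-number, so $H \leq I_\pi(H^r) = \widehat{H^r}^\pi$ viewed inside $\widehat{G}^\pi$. Composing $\varphi$ with the inclusion $L \subseteq \widehat{L}^\pi \leq \widehat{G}^\pi$ yields a homomorphism from the $\pi$-torsion-free nilpotent group $H^r$ into the $\pi$-complete nilpotent group $\widehat{L}^\pi$, which by the universal property of the $\pi$-completion (a consequence of Theorem \ref{Kh1}) extends uniquely to $\hat\varphi : \widehat{H^r}^\pi \to \widehat{L}^\pi$. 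For every $h \in H$ we then have $\hat\varphi(h)^r = \hat\varphi(h^r) = \varphi(h^r) \in L \leq G^{r^c}$; Mal'tsev's Lemma \ref{Mal} furnishes $y \in G$ with $y^r = \varphi(h^r)$, and uniqueness of $r$-th roots in the $\pi$-torsion-free group $\widehat{G}^\pi$ forces $\hat\varphi(h) = y \in G$. Hence $\Psi := \hat\varphi|_H$ maps $H$ into $G$ and satisfies $\Psi(h)^r = \varphi(h^r) \neq 1$, so $\Psi(h) \neq 1$. The specific choice $m = r^c$ is made precisely so that Mal'tsev's lemma combined with root uniqueness pulls the extension down from $\widehat{G}^\pi$ into $G$.
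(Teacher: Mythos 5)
Your proposal is correct and follows the same skeleton as the paper's proof: the forward direction via $m$-th roots taken in $\widehat{G}^\pi$ and the hypothesis of equivalence with $\widehat{G}^\pi$ (plus Proposition \ref{BG_1} for $G^m\preceq G$), and the converse via Lemma \ref{Li}, the equivalence of $G$ with $G^{r^{c}}$, extension of the approximating map to $\pi$-completions, and Lemma \ref{Mal} together with uniqueness of $\pi$-roots (which indeed follows from Proposition \ref{Kh2}) to pull the image of $H$ back into $G$. The one point where you diverge, and where your write-up is slightly thinner than the paper, is the extension step: you work with a single separating homomorphism $\varphi\colon H^{r}\to L\leq G^{r^{c}}$, which need not be injective, and you then invoke ``the universal property of the $\pi$-completion'' to extend it to $\hat\varphi\colon\widehat{H}^\pi\to\widehat{L}^\pi$. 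Theorem \ref{Kh1} as quoted in the paper only extends \emph{isomorphisms} to the completions; the paper avoids the issue by using the embedding form of approximation (Proposition \ref{em}), i.e.\ an embedding $\varphi\colon H^{r}\to\prod_{i}G_{i}^{r^{k}}$, so that Theorem \ref{Kh1}(b) applies verbatim to $\varphi$ viewed as an isomorphism onto its image, whose completion is realized as the $\pi$-isolator inside the $\pi$-complete group $\prod_{i}\widehat{G_{i}}^\pi$. Your version is true and can be repaired cheaply --- either pass to the embedding form as the paper does, or reduce the homomorphism-extension property to Theorem \ref{Kh1}(b) by extending the isomorphism $x\mapsto(x,\varphi(x))$ of $H^{r}$ onto its graph in $H^{r}\times L$ and composing with the projection, using uniqueness of roots to see that the first coordinate of the extension is the identity --- but as written this step is asserted rather than justified from the tools the paper provides. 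Everything else (that $K\leq\widetilde{K}^{m}$ in the forward direction, that $h^{r}\neq 1$, that $|H:H^{r}|$ is a $\pi$-number via Lemma \ref{Rob}, and the descent $\hat\varphi(x)\in G$ via Mal'tsev's lemma and unique roots) is accurate.
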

\begin{proof}
Our argument goes along the lines of Theorem 3.1 in \cite{BG} but we add new assertions related to $\pi$-torsion in nilpotent groups.

Let the groups $G$ and $\widehat{G}^\pi$ be geometrically equivalent. Let us show that then groups $G$ and $G^m$ are also geometrically equivalent for any $\pi$-number $m$. The group $G^m$ is a subgroup of $G$. By Proposition \ref{BG_1}, we have $G^m\preceq G$. Let us prove that $G\preceq G^m$. Consider a finitely generated subgroup $H=<h_1,\dots,h_s>$ of $G$. Denote by $h_i^{\frac1 m }$ the root of the equation $x^m=h_i$ in $\widehat{G}^\pi$. Denote by
  $H^{\frac1 m }=<h_1^{\frac1 m },\dots,h_s^{\frac1 m}>$ the subgroup of $\widehat{G}^\pi$ generated by
   $h_i^{\frac1 m }, i=1,\dots ,s$. It is clear that $H^{\frac1 m }\preceq\widehat{G}^\pi$. Therefore, there exists an embedding $\varphi:H^{\frac1 m }\rightarrow \prod_{i\in I}\widehat{G}_i^\pi $, where $G_i, i\in I$, are isomorphic copies of $G$.  Since the groups $G$ and $\widehat{G}^\pi$ are geometrically equivalent, there exists an embedding $\psi:H^{\frac1 m }\rightarrow \prod_{i\in I}G_{i}$. Consequently, $H$ is embedded into a cartesian product of the group $G^m$. Therefore, $H\preceq G^m$. As a consequence, we obtain $G\preceq G^m$. Therefore, $G$ and $G^m$ are geometrically equivalent.

  Conversely, let the groups $G$ and $G^m$ be geometrically equivalent for every $\pi$-number $m$. We will prove that $G$ and $\widehat{G}^\pi$ are geometrically equivalent. Since $G$ is a subgroup of $\widehat{G}^\pi$, we have $G\preceq\widehat{G}^\pi$. Let us prove that $\widehat{G}^\pi\preceq G$. Let $H$ be a subgroup of $\widehat{G}^\pi$ generated by elements $h_1,\dots,h_s$.  By Lemma \ref{Li}, there exists a $\pi$-number $r$ such that $H^{r}\leq G$. By Proposition \ref{BG_1}, $H^{r}\preceq G$. If $k$ is the nilpotency class of $G$, then $G$ is geometrically equivalent to $G^{r^{k}}$. Therefore, $H^{r}\preceq G^{r^{k}}$. Hence, there exists an embedding $\varphi: H^r\rightarrow\prod_{i\in I}{G}_i^{r^{k}}$, where $G_i, i\in I$, are isomorphic copies of the group $G$. Let $\varphi_i, i\in I$, are the coordinate functions of $\varphi$, i.e., $\varphi=(\varphi_i)_{i\in I}$.

  It is clear that $\widehat{H^r}^\pi= \widehat{H}^\pi$ and $\widehat{G^{r^k}}^\pi=\widehat{G}^\pi$. By Theorem \ref{Kh1}, the mapping $\varphi$ can be extended to the embedding $\bar\varphi: \widehat{H}^\pi\rightarrow \widehat{G}^\pi $. Then $\bar\varphi(h_j^r)=\varphi(h_j^r)=(\varphi_i(h_j^r))_{i\in I}$. By Lemma \ref{Mal}, the equation $x^r=\varphi_i(h_j{^r})$ has a solution in $G_i$. Since $G_i$ is a $\pi$-torsion-free group, $\varphi_i(h_j)\in G_i$. Therefore, the mapping $\bar\varphi$ is an embedding of $H$ into a cartesian power of the group $G$. Hence, $\widehat{G}^\pi\preceq G$. As a consequence, the groups $\widehat{G}^\pi$ and $G$ are geometrically equivalent.
\end{proof}
\begin{corollary}\label{cr_2}
Let $G$ be a  $\pi$-torsion-free nilpotent group. If  every finitely generated subgroup $H$ of $G$ is geometrically equivalent to its $\pi$-completion $\widehat{H}^\pi$, then the groups $G$ and $\widehat{G}^\pi$ are geometrically equivalent.
 \end{corollary}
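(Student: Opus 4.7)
The plan is to apply Theorem \ref{criterion}, which reduces the claim to showing that $G$ is geometrically equivalent to $G^m$ for every $\pi$-number $m$. Since $G^m$ is a subgroup of $G$, Proposition \ref{BG_1} immediately gives $G^m\preceq G$, so the only nontrivial direction is $G\preceq G^m$.

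By Proposition \ref{em}, proving $G\preceq G^m$ reduces to showing that every finitely generated subgroup $H=\langle h_1,\dots,h_s\rangle$ of $G$ can be approximated by subgroups of $G^m$. Here I would invoke the hypothesis $H\sim\widehat{H}^\pi$. The idea is to pass to the $\pi$-completion, extract $m$-th roots there, and then transfer an embedding back to $G$ using the assumed geometric equivalence.

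Concretely, since $\widehat{H}^\pi$ is $\pi$-complete and $\pi$-torsion-free, each $h_j$ has a unique $m$-th root $h_j^{1/m}\in\widehat{H}^\pi$. Set $K=\langle h_1^{1/m},\dots,h_s^{1/m}\rangle\leq\widehat{H}^\pi$, and observe that $H\leq K$ since $h_j=(h_j^{1/m})^m$. Because $K\leq\widehat{H}^\pi$ and $H\sim\widehat{H}^\pi$, Proposition \ref{BG_1} together with transitivity of $\preceq$ yields $K\preceq H$. Hence there is an embedding $\varphi=(\varphi_i)_{i\in I}:K\to\prod_{i\in I}H_i$ into a Cartesian product of isomorphic copies $H_i\leq G_i$ of $H$. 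Restricting $\varphi$ to $H$, each generator is sent to $\varphi(h_j)=\varphi(h_j^{1/m})^m=(\varphi_i(h_j^{1/m})^m)_{i\in I}\in\prod_{i\in I}G_i^m$. Thus $\varphi|_H$ embeds $H$ into a Cartesian product of copies of $G^m$, giving $H\preceq G^m$.

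The only subtle point is the observation that $H\leq K$: once this inclusion is noted, $\varphi$ is automatically forced to send $H$ into the product of $m$-th power subgroups, and the remainder of the argument is a clean chaining of Theorem \ref{criterion} with Propositions \ref{em} and \ref{BG_1}. No further use of $\pi$-isolators or of Lemma \ref{Mal} is needed, since the roots are harvested inside the completions of the finitely generated subgroups, where the hypothesis supplies the geometric equivalence directly.
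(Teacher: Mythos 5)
Your proposal is correct and takes essentially the same route as the paper: both reduce, via Theorem \ref{criterion} applied to $G$, to showing that $G$ is geometrically equivalent to $G^m$ for every $\pi$-number $m$, and both obtain $G\preceq G^m$ by approximating a finitely generated subgroup $H\leq G$ using the hypothesis that $H$ is geometrically equivalent to $\widehat{H}^\pi$, together with Propositions \ref{em} and \ref{BG_1}. The only difference is cosmetic: you inline the root-extraction argument in $\widehat{H}^\pi$ (essentially the forward direction of Theorem \ref{criterion} applied to $H$) and restrict the resulting embedding of $K$ to $H$, whereas the paper simply cites Theorem \ref{criterion} to get $H\preceq H^m$ and then uses $H_i^m\leq G_i^m$.
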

\begin{proof}
The groups $H$ and $\widehat{H}^\pi$ are geometrically equivalent. Therefore, by Theorem \ref{criterion}, $H$ and $H^m$ are geometrically equivalent for every $\pi$-number $m$. As a consequence, $H\preceq H^m$. By Proposition \ref{em}, there exists an embedding $\mu:H\rightarrow \prod_{i\in I}H_i^m$, where $H_i, i\in I$, are isomorphic copies of the group $H$. The mapping $\mu$ can be extended to the embedding $\bar\mu: H\rightarrow \prod_{i\in I}G_i^m $, where $G_i, i\in I$, are isomorphic copies of the group $G$. Therefore, $G\preceq G^m$. Since $G^m\preceq G$, the groups $G$ and $G^m$ are geometrically equivalent. By Theorem \ref{criterion}, the group $G$ is geometrically equivalent to its $\pi$-completion $\widehat{G}^\pi$.
 \end{proof}
\begin{corollary}
Let $G$ be a $\pi$-torsion-free abelian group. Then $G$ is geometrically equivalent to its $\pi$-completion $\widehat{G}^\pi$.
\end{corollary}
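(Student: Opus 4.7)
The plan is to reduce immediately to the finitely generated case via Corollary \ref{cr_2} and then invoke the computation already carried out in Example \ref{ex_1}. So the first step is to observe that being $\pi$-torsion-free abelian is inherited by subgroups: if $H\leq G$, then $H$ is abelian, and any element of $H$ killed by a $\pi$-number is killed inside $G$, hence trivial. Thus every finitely generated subgroup $H$ of $G$ is itself a finitely generated $\pi$-torsion-free abelian group.

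Next I would apply Example \ref{ex_1} to each such $H$. By the structure theorem for finitely generated abelian groups, $H$ decomposes as in (2.1) as a product of a free abelian part $\mathbb{Z}^s$ and a torsion part $\mathbb{Z}/p_1^{\alpha_1}\mathbb{Z}\times\cdots\times\mathbb{Z}/p_k^{\alpha_k}\mathbb{Z}$ with $p_i\notin\pi$. The computation in Example \ref{ex_1} identifies $\widehat{H}^\pi\cong (\mathbb{Q}_\pi^{+})^s\times \mathbb{Z}/p_1^{\alpha_1}\mathbb{Z}\times\cdots\times\mathbb{Z}/p_k^{\alpha_k}\mathbb{Z}$, and the geometric equivalence $H\sim\widehat{H}^\pi$ then follows from Proposition \ref{cr} because every finitely generated subgroup of $(\mathbb{Q}_\pi^{+})^s$ embeds into $\mathbb{Z}^s$ (up to rescaling by a $\pi$-number, which is an isomorphism onto its image) and conversely $\mathbb{Z}^s$ sits inside $(\mathbb{Q}_\pi^{+})^s$.

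Having established that every finitely generated subgroup $H$ of $G$ is geometrically equivalent to $\widehat{H}^\pi$, I would finish by quoting Corollary \ref{cr_2} directly: this is precisely its hypothesis, and its conclusion is that $G$ is geometrically equivalent to $\widehat{G}^\pi$.

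There is no real obstacle here; the substantive content has all been placed upstream, in Example \ref{ex_1} (the structure computation and the finitely generated case) and in Corollary \ref{cr_2} (the passage from finitely generated subgroups to the whole group). The only thing to take care of is the trivial verification that $\pi$-torsion-freeness and abelianness descend to subgroups, so that Example \ref{ex_1} really does apply to every finitely generated $H\leq G$.
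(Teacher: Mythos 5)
Your proposal is correct and follows essentially the same route as the paper: reduce to finitely generated subgroups (noting that $\pi$-torsion-freeness and commutativity pass to subgroups), apply the computation of Example \ref{ex_1} to each such subgroup, and conclude by Corollary \ref{cr_2}. The extra remarks on how Proposition \ref{cr} yields the equivalence in the finitely generated case only elaborate what Example \ref{ex_1} already asserts.
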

\begin{proof}
Let $H$ be a finitely generated subgroup of $G$. According to Example \ref{ex_1}, the groups $\widehat{H}^\pi$ and $H$ are geometrically equivalent. By Corollary \ref{cr_2}, the group $G$ is geometrically equivalent to its $\pi$-completion.
\end{proof}
Now we give other examples of $\pi$-torsion-free nilpotent groups which are geometrically equivalent to their $\pi$-completions $\widehat{G}^\pi$.

In what follows, we use the following Dick's Theorem (cf. \cite{Kr}, Ch.5, Sec.18)
 \begin {theorem}
Let a group $G$ be represented as follows
$$
G=<x_i\;|\;r_j(x_{j_{1}},\dots x_{j_{s}}>=1; i, j_k\in I, j\in J>
$$
Suppose that a group $D$ contains a set $\{d_i,i\in I\}$ such that $$r_j(d_{j_{1}},\dots,d_{j_{s}})=1$$
 holds in $D$ for every $j\in J$. Then the map $x_i\rightarrow d_i, i\in I$ extends to a homomorphism of $G$ onto $D$.
 \end {theorem}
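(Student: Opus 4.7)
The plan is to prove this by the standard two-step universal-property argument: first lift the assignment $x_i\mapsto d_i$ to the ambient free group, and then pass to the quotient by the relators. The word ``onto'' in the conclusion is to be read in the sense that the image is the subgroup of $D$ generated by $\{d_i\mid i\in I\}$; if $D$ is itself generated by these elements (as is always the case when one applies von Dyck's theorem in practice), the map is genuinely onto $D$.

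First I would let $F=F(X)$ be the free group on the set $X=\{x_i\mid i\in I\}$. By the universal property of the free group there is a unique homomorphism $\varphi\colon F\to D$ with $\varphi(x_i)=d_i$ for every $i\in I$. Next, let $N$ be the normal closure in $F$ of the set of relators $R=\{r_j(x_{j_1},\dots,x_{j_s})\mid j\in J\}$, so that the given presentation means $G\cong F/N$. To show that $\varphi$ factors through $G$, it suffices to verify $N\subseteq\ker\varphi$; since $\ker\varphi$ is normal in $F$, this reduces to checking the relators themselves, and indeed
\[
\varphi\bigl(r_j(x_{j_1},\dots,x_{j_s})\bigr)=r_j\bigl(\varphi(x_{j_1}),\dots,\varphi(x_{j_s})\bigr)=r_j(d_{j_1},\dots,d_{j_s})=1
\]
by the standing hypothesis on the $d_i$. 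Consequently $\varphi$ descends to a well-defined homomorphism $\bar\varphi\colon G=F/N\to D$ with $\bar\varphi(x_iN)=d_i$.

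There is essentially no obstacle here; the argument is a direct application of the universal properties of free groups and of quotients. The only delicate point is the meaning of ``onto'': the image of $\bar\varphi$ equals the subgroup $\langle d_i\mid i\in I\rangle\le D$, so $\bar\varphi$ is surjective onto $D$ precisely when the prescribed elements $d_i$ generate $D$, which is the natural reading of the conclusion.
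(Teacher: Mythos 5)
Your argument is correct and is the standard proof of von Dyck's theorem: lift $x_i\mapsto d_i$ to the free group $F(X)$, observe the relators lie in the kernel, and factor through the quotient; the paper itself states this result with a citation to Kurosh and gives no proof, so there is nothing to compare beyond noting that your remark about ``onto'' (the map is surjective exactly when the $d_i$ generate $D$, which is how the theorem is used in the paper) is the right reading of the statement.
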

We also need the following claim
 \begin{proposition}[see \cite{Kh2}]\label{Kh2}
 Let $H$ be a $\pi$-torsion-free nilpotent group. If $x^n\cdot y^m= y^m\cdot x^n$, where $x,y \in H$ and $m,n$ are $\pi$-numbers,
  then $x\cdot y=y\cdot x$.
 \end{proposition}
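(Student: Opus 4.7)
The plan is to pass to the $\pi$-completion $\widehat{H}^\pi$, where the $\mathbb{Q}_\pi$-power structure makes $\pi$-number roots unique, and then extract commutativity by a two-step conjugation argument. By Theorem~\ref{Kh1}, $H$ embeds in $\widehat{H}^\pi$, a $\pi$-torsion-free nilpotent $\mathbb{Q}_\pi$-powered group whose power operation satisfies $(g^r)^s = g^{rs}$ for all $r, s \in \mathbb{Q}_\pi$. Since $H \le \widehat{H}^\pi$ and $H$ is $\pi$-torsion-free, it suffices to establish $xy = yx$ inside $\widehat{H}^\pi$.

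The key auxiliary fact is the uniqueness of $\pi$-number roots in $\widehat{H}^\pi$: for any $g \in \widehat{H}^\pi$ and any $\pi$-number $n$, the defining law yields $(g^n)^{1/n} = g^{n \cdot (1/n)} = g$, so if $u^n = v^n$ then $u = (u^n)^{1/n} = (v^n)^{1/n} = v$. Armed with this, I rewrite the hypothesis $x^n y^m = y^m x^n$ as $y^m x^n y^{-m} = x^n$. Since conjugation by $y^m$ is a group automorphism of $\widehat{H}^\pi$, the element $y^m x y^{-m}$ satisfies $(y^m x y^{-m})^n = y^m x^n y^{-m} = x^n$. Thus $y^m x y^{-m}$ and $x$ are both $n$-th roots of $x^n$, and uniqueness forces $y^m x y^{-m} = x$, i.e.\ $x$ commutes with $y^m$. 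Repeating the argument with the roles of $x$ and $y$ (and of $n$ and $m$) interchanged, the identity $(xyx^{-1})^m = x y^m x^{-1} = y^m$ exhibits $xyx^{-1}$ and $y$ as two $m$-th roots of $y^m$, whence $xyx^{-1} = y$ and $xy = yx$.

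I do not anticipate a serious obstacle: the whole argument reduces to the identity $(g^n)^{1/n} = g$ in the $\mathbb{Q}_\pi$-powered group $\widehat{H}^\pi$, which follows immediately from the defining law of such groups. The passage to $\widehat{H}^\pi$ is essential precisely because the fractional power $1/n$ is not available inside $H$ itself; once inside $\widehat{H}^\pi$, uniqueness of $\pi$-number roots is automatic and the two symmetric conjugation steps close the argument.
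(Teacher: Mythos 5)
Your argument is correct. The paper itself gives no proof of this proposition --- it is quoted as a known fact from Khukhro's book \cite{Kh2} --- so your write-up supplies an actual argument where the paper only cites. Your route is the natural one given the tools the paper does state: embed $H$ into $\widehat{H}^\pi$ via Theorem~\ref{Kh1}, observe that the $\mathbb{Q}_\pi$-power law $(g^r)^s=g^{rs}$ makes extraction of $\pi$-number roots unique (apply $(\cdot)^{1/n}$ to $u^n=v^n$), and then run the two conjugation steps: $(y^mxy^{-m})^n=x^n$ forces $y^mxy^{-m}=x$, and then $(xyx^{-1})^m=y^m$ forces $xyx^{-1}=y$. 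Each step checks out, including the use of conjugation as a group automorphism commuting with integer powers, and commutativity in $\widehat{H}^\pi$ descends to $H$ since $H\le\widehat{H}^\pi$. The one caveat worth recording is logical order rather than validity: in the standard development (Khukhro, Mal'tsev) uniqueness of $\pi$-roots and the commuting-powers lemma are proved \emph{directly} in $\pi$-torsion-free nilpotent groups, by induction on the nilpotency class, and are then used to construct the $\pi$-completion; so your derivation is perfectly sound as a deduction from Theorem~\ref{Kh1} taken as a black box (which is how this paper uses it), but it would be circular as a from-scratch proof if the construction of $\widehat{H}^\pi$ one has in mind already invokes this proposition. What your approach buys is brevity and avoidance of the class-induction argument; what the textbook approach buys is independence from the completion theorem.
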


Now we are ready to prove
\begin{proposition}\label{A2}
Let $G$ be a finitely generated $\pi$-torsion-free two step nilpotent group such that the group $G/I_\pi{(G^{\prime})}$ is torsion-free. Then $G$ is geometrically equivalent to its $\pi$-completion $\widehat{G}^\pi$.
\end{proposition}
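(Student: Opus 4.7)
The plan is to invoke Theorem \ref{criterion}: the conclusion reduces to showing $G$ is geometrically equivalent to $G^m$ for every $\pi$-number $m$. Since Proposition \ref{BG_1} automatically gives $G^m \preceq G$, it suffices to exhibit an injective homomorphism $\varphi \colon G \to G^m$, for then $\varphi(G)$ is a copy of $G$ sitting inside $G^m$ and a second application of Proposition \ref{BG_1} yields $G \preceq G^m$.

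For the construction, set $A := I_\pi(G')$. The hypothesis together with finite generation forces $G/A \cong \mathbb{Z}^n$, and one checks $A \subseteq Z(G)$ as follows: for $a \in A$ there is a $\pi$-number $k$ with $a^k \in G' \subseteq Z(G)$, so in the two-step nilpotent setting $[a, g]^k = [a^k, g] = 1$ for every $g \in G$, and $\pi$-torsion-freeness forces $[a, g] = 1$ (Proposition \ref{Kh2} gives the same conclusion). Fixing lifts $t_1, \ldots, t_n \in G$ of a basis of $G/A$, every $g \in G$ admits a unique normal form $g = t_1^{a_1} \cdots t_n^{a_n} z$ with $(a_i) \in \mathbb{Z}^n$ and $z \in A$, and I would define
\[
\varphi\bigl(t_1^{a_1} \cdots t_n^{a_n} z\bigr) := t_1^{m a_1} \cdots t_n^{m a_n} z^{m^2}.
\]

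Three short verifications then finish the proof. \emph{Homomorphism:} reordering $(\prod t_i^{a_i})(\prod t_i^{b_i})$ into normal form produces a central cocycle $c(a,b) = \prod_{j > i} [t_j, t_i]^{a_j b_i} \in G'$, which is bilinear in $(a,b)$ by the usual two-step commutator identities; hence $(a,b) \mapsto (ma, mb)$ multiplies $c$ by $m^2$, exactly matching the prescription $z \mapsto z^{m^2}$ on $A$. \emph{Image in $G^m$:} both $t_i^{m a_i} = (t_i^{a_i})^m$ and $z^{m^2} = (z^m)^m$ are $m$-th powers in $G$. \emph{Injectivity:} if $\varphi(g) = 1$, projecting to $G/A \cong \mathbb{Z}^n$ forces $m a_i = 0$ hence $a_i = 0$, after which $z^{m^2} = 1$ combined with $\pi$-torsion-freeness of $G$ gives $z = 1$. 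The single point requiring any insight is using the exponent $m^2$ rather than $m$ on the central part: this is dictated by the quadratic rescaling of the bilinear cocycle, and without it $\varphi$ would fail to be a homomorphism. Everything else, including the reduction to this single homomorphism via Theorem \ref{criterion}, is mechanical.
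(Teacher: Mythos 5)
Your proof is correct, and it arrives at exactly the same embedding as the paper — send lifts of a basis of $G/I_\pi(G')$ to their $m$-th powers and elements of $I_\pi(G')$ to their $m^2$-th powers, then conclude $G\preceq G^m$ and invoke Theorem \ref{criterion} — but you verify that this prescription is a well-defined injective homomorphism by a different route. The paper works with generators and relations: it shows $I_\pi(G')$ is abelian via Proposition \ref{Kh2}, rewrites an arbitrary relation by the collection process, raises it to a $\pi$-power $p$ to push the $c_i$-part into $G'$ and kill the mixed commutator word, checks the rescaled relation still holds, and then appeals to Dick's theorem, finishing injectivity with a rather terse remark that $\pi$-torsion-freeness forces the extension to be a monomorphism. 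You instead observe the stronger fact that $I_\pi(G')$ is central (your $[a,g]^k=[a^k,g]=1$ argument), which yields a unique normal form $g=t_1^{a_1}\cdots t_n^{a_n}z$, define $\varphi$ directly on normal forms, and check the homomorphism property by bilinearity of the central cocycle $c(a,b)=\prod_{j>i}[t_j,t_i]^{a_jb_i}$, which rescales by exactly $m^2$; injectivity is then argued explicitly by projecting to $G/A\cong\mathbb{Z}^n$ and using $\pi$-torsion-freeness on $A$. What your version buys is self-containedness (no collection process, no Dick's theorem, no need to choose a basis of $I_\pi(G')$ or to manipulate relations raised to the power $p$) and a fully spelled-out injectivity step, which in the paper is only asserted; what the paper's version buys is a template that fits its later relatively-free-group argument, where one cannot rely on a normal form and must reason through generators, relations and induced maps.
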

\begin{proof}
First, let us show that $I_\pi{(G^{\prime})}$ is an abelian group. If  $d_1,d_2\in I_\pi{(G^{\prime})}$, then there are $\pi$-numbers
$k_1, k_2$ such that $d_1^{k_1}, d_2^{k_2}\in G^{\prime}$. Since $G$ is a two step nilpotent group,
 $d_1^{k_1}\cdot d_2^{k_2}=d_2^{k_2}\cdot d_1^{k_1}$. By Proposition \ref{Kh2}, $d_1\cdot d_2= d_2\cdot d_1$. Since $G$ is a finitely generated nilpotent group, the subgroup $I_\pi{(G^{\prime})}$ is an abelian finitely generated group.

 Let us choose a basis $\bar g_1,\dots,\bar g_s$ in the abelian group $G/I_\pi{(G^{\prime})}$. Denote by $g_1,\dots,g_s$ the preimages of these elements in the group $G$. Now choose a basis $c_1,\dots,c_k$ in the abelian group $I_\pi{(G^{\prime})}$. By definition, there are $\pi$-numbers
  $p_i, i=1,\dots k$, such that $c_i^{p_i}\in G^\prime$.

  The group $G$ is generated by elements $g_1,\dots,g_s,c_1,\dots,c_k$. Let
 \begin{equation}\label{rel_d}
 R(g_1,\dots,g_s,c_1,\dots,c_k)=1
  \end{equation}
 be a relation in the group $G$. Using the collection process in the group $G$ (see ,e.g., \cite{LR}, Ch.2, Sec.2), the relation (\ref{rel_d}) can be rewritten as follows
\begin{equation}\label{rel_a}
g^{r_1}_1\cdot\ldots\cdot g^{r_s}_s \cdot c^{t_1}_1\ldots\cdot c^{t_k}_k \cdot W_1([g_i,g_j])\cdot W_2([c_l,g_d])=1,\;r_i\geq 0,t_i\geq 0,
\end{equation}
where $W_1([h_i,h_j])$ is a word in the commutators $[h_i,h_j]$ and  $W_2([c_l,g_d])$ is a word in the commutators $[c_l,g_d]$. In the group $G/I_\pi{(G^{\prime})}$ the relation (\ref{rel_a}) has the form
$$ \bar g^{r_1}_1\cdot\ldots\cdot \bar g^{r_s}_s =1 $$
Since the group $G/I_\pi{(G^{\prime})}$ is free abelian, $r_1=\dots=r_s=0$. Therefore, the relation (\ref{rel_a}) is equivalent to
\begin{equation}\label{rel_b}
 c^{t_1}_1\ldots\cdot c^{t_k}_k \cdot W_1([g_i,g_j])\cdot W_2([c_l,g_d])=1
\end{equation}

Denote by $p=\underset{i}\max\; p_i$. Raising the relation (\ref{rel_b}) to the power $p$, we have
\begin{equation}\label{rel_e}
 c^{pt_1}_1\ldots\cdot c^{pt_k}_k \cdot W_1([g_i,g_j]^p)\cdot W_2([c_l,g_d]^p)=1
\end{equation}
Since $G$ is a $\pi$-torsion-free group, the relation (\ref{rel_e}) is equivalent to (\ref{rel_b}). Since $[c_l,g_d]^p=[c_l^{p},g_d]=1$, we obtain $W_2([c_l,g_d]^p)=1$. As a consequence, the relation (\ref{rel_e}) is equivalent to
\begin{equation}\label{rel_f}
 c^{pt_1}_1\ldots\cdot c^{pt_k}_k \cdot W_1([g_i,g_j]^p)=1
\end{equation}
Note, that elements $c^{pt_i}_i$ from (\ref{rel_f}) belong to $G^{\prime}$.

For a $\pi$-number $m$ consider the map $\varphi: G\rightarrow G^m$ given by the rules:
 $$
 \varphi(g_i)=g_i^{m},\;\;\varphi(c_i)=c_i^{m^2}.
 $$
Since $[g^m_i,g^m_j]=[g_i,g_j]^{m^2}$, the relation (\ref{rel_f}) has the following form
\begin{equation}\label{rel_c}
  c^{m^2pt_1}_1\ldots\cdot c^{m^2pt_k}_k \cdot (W_1([g_i,g_j]^p))^{m^{2}}=1
\end{equation}
Since the group $G$ is $\pi$-torsion-free, the relation (\ref{rel_c}) is equivalent to the relation (\ref{rel_f}), and, as a consequence, to the relation (\ref{rel_d}). By Dick's Theorem, the mapping $\varphi$ can be extended to a homomorphism of $G$ into $G^m$. Since $G$ is a $\pi$-torsion-free group, this homomorphism is a monomorphism $G$ into $G^m$. Therefore, $G\preceq G^m$. By Theorem \ref{criterion}, $G$ is geometrically equivalent to $\widehat{G}^\pi$.
\end{proof}
\begin{corollary}[\cite{BG}]\label{nil_2}
Let $G$ be a torsion-free two-step nilpotent group. Then $G$ is geometrically equivalent to its Mal'tsev's completion $\widehat{G}$.
\end{corollary}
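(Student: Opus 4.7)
The plan is to derive the corollary directly from Proposition \ref{A2} together with Corollary \ref{cr_2}, by specializing to the case where $\pi$ is the set of all primes (so that $\pi$-torsion-free means torsion-free and $\widehat{G}^\pi$ becomes the Mal'tsev completion $\widehat{G}$). The two-step hypothesis is exactly what appears in Proposition \ref{A2}; the main point is to handle the absence of the finite-generation assumption and to verify the extra condition on $G/I_\pi(G')$ automatically.

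First I would reduce to the finitely generated case. Let $H$ be any finitely generated subgroup of $G$. Then $H$ is torsion-free (inherited from $G$), two-step nilpotent (as a subgroup of a two-step nilpotent group), and finitely generated by choice. So $H$ satisfies three of the four hypotheses of Proposition \ref{A2} with $\pi$ the set of all primes.

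Next I would verify that $H/I(H')$ is torsion-free, which is the remaining hypothesis. This is in fact automatic whenever $\pi$ consists of all primes: if $\bar h\in H/I(H')$ satisfies $\bar h^n=1$, then $h^n\in I(H')$, so $(h^n)^m=h^{nm}\in H'$ for some $m\geq 1$, whence $h\in I(H')$ and $\bar h=1$. Thus $H/I(H')$ has no torsion, and Proposition \ref{A2} applies to $H$, giving that $H$ is geometrically equivalent to $\widehat{H}$.

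Finally, since every finitely generated subgroup $H$ of $G$ is geometrically equivalent to its Mal'tsev completion $\widehat{H}$, Corollary \ref{cr_2} yields that $G$ itself is geometrically equivalent to $\widehat{G}$. There is no serious obstacle beyond the short verification that the isolator-quotient is torsion-free in the all-primes setting; the substantive work has already been carried out in Proposition \ref{A2} and Corollary \ref{cr_2}.
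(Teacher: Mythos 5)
Your proposal is correct and follows essentially the same route as the paper: reduce to a finitely generated subgroup $H$, note that $H/I(H')$ is automatically torsion-free when $\pi$ is the set of all primes, apply Proposition \ref{A2}, and conclude via Corollary \ref{cr_2}. You merely make explicit two small steps the paper leaves implicit (the verification that $H/I(H')$ is torsion-free and the appeal to Corollary \ref{cr_2}), which is fine.
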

\begin{proof}
Let $H$ be a finitely generated subgroup of $G$. The group $H/I(H^{\prime})$ is torsion-free. By Proposition \ref{A2}, the group $G$ is geometrically equivalent to its Mal'tsev's completion $\widehat{G}$.
\end{proof}

\begin{example}
Let  $\mathbb{H}_3(\mathbb{Z})$ be the Heisenberg group  generated by elements $x,y,z$ with the defining relations $zx=xz, yz=zy, [x,y]=z$.  Elements of $H_3(\mathbb{Z})$ are all formal expressions of the form $x^{l}y^{s}z^{n}$, where $l,m,n\in\mathbb{Z}$. The group  $H_3(\mathbb{Z})$  is torsion-free two-step nilpotent. The commutator $H_3(\mathbb{Z})^{\prime}$ is generated by the element $z$. The group $H_3(\mathbb{Z})/H_3(\mathbb{Z})^{\prime}$ is a free abelian one generated by the elements $x$ and $y$. By Corollary \ref{nil_2}, the group $\mathbb{H}_3(\mathbb{Z})$ is geometrically equivalent to its Mal'tsev's completion $\widehat{\mathbb{H}_3(\mathbb{Z})}$.
\end{example}

\begin{example}
Let  $G$ be a group generated by elements $a,b,c$ with the defining relations $ca=ac, cb=bc, [a,b]=c^2, c^4=1$.  Elements of $G$ are all formal expressions of the form $a^{l}b^{s}c^{n}$, where $l,m,\in\mathbb{Z}$ and  $n\in \mathbb{Z}_3$.
The group  $G$  is two step nilpotent $\pi$-torsion-free, where $\pi$ is the set of all prime numbers without $2$. The commutator $G^{\prime}=<c^2>$ and $I_\pi{(G^{\prime}})=<c>$. The group $G/I_\pi{(G^{\prime}})$ is a free abelian one generated by the elements $a$ and $b$. By Proposition \ref{A2}, the group $G$ is geometrically equivalent to $\widehat{G}^\pi$.
\end{example}
The following question arises: Is it true that every $\pi$-torsion-free two step nilpotent group is geometrically equivalent to its $\pi$-completion?

\subsection {Geometric equivalence of a relatively free nilpotent $\pi$-torsion-free group to its $\pi$-completion.}
It was proved by A Tsurkov in \cite {T} that a torsion-free  nilpotent relatively free group is geometrically equivalent to its Mal'tsev's completion. His proof is based on the Mal'tsev correspondence technique between nilpotent $\mathbb{Q}$-powered groups and nilpotent Lie algebras over $\mathbb{Q}$ as well as  on the Lazard correspondence between groups with a central filtration and Lie rings.

Our goal is to extend  Tsurkov's result to the class of relatively free nilpotent $\pi$-torsion-free groups. We prove that a relatively free nilpotent $\pi$-torsion-free group is geometrically equivalent to its $\pi$-completion. Note that, there is no good correspondence between nilpotent $\mathbb{Q}_\pi$-powered groups and Lie $\mathbb{Q}_\pi$-algebras (see \cite{Kh1}). For this reason, here we use the technique related to the theory of isolators in groups. In the proof of this result we also apply the criterion for a $\pi$-torsion-free nilpotent group to be geometrically equivalent to its $\pi$-completion (see Theorem \ref{criterion}).

The method of associating a Lie ring to a group was studied in detail by M. Lazard \cite{L2} (see also \cite {B}).

Let $G$ be a group. Consider the lower central series of $G$:
$$
 G\geq\gamma_1{(G)}\geq\cdots\geq\gamma_i{(G)}\geq\gamma_{i+1}{(G)}\geq\cdots
$$
Let $\pi$ be a set of primes. Denote by $G_i=I_\pi{(\gamma_i{(G))}}$ the $\pi$-isolator the group $\gamma_i{(G)}$ in $G$. Note that the group $G_i=I_\pi{(\gamma_i{(G))}}$ is a subgroup of $G$, since it coincides with the preimage of the $\pi$-isolator of the identity subgroup of the nilpotent groups $G/\gamma_i{(G)}$ (see Theorem \ref{iso}).

Therefore, we have a normal series:
\begin{equation}\label{cf}
 G\geq\ G_1\geq\cdots\geq G_i\geq G_{i+1}\geq\cdots
\end{equation}
\begin{lemma}[see \cite{Kh1}]
For any group $G$ and any natural numbers $i,j$ the following holds
 $$[G_i, G_j]\leq G_{i+j},$$
 that is, $\{G_i| i=1,\dots\}$ is a central filtration of $G$.
\end {lemma}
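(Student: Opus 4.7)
The plan is to reduce modulo $\gamma_{i+j}(G)$ and then kill the $\pi$-torsion, producing a $\pi$-torsion-free nilpotent quotient where Proposition \ref{Kh2} applies.

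First I would pass to $\bar G = G/\gamma_{i+j}(G)$. Since $\gamma_{i+j}(\bar G)=1$, the group $\bar G$ is nilpotent of class at most $i+j-1$, so by Proposition \ref{iso} the $\pi$-torsion $T_\pi(\bar G) = I_\pi(1)$ is a subgroup; it is also normal since it is the $\pi$-isolator of a characteristic subgroup. Moreover, the image of $G_{i+j} = I_\pi(\gamma_{i+j}(G))$ in $\bar G$ is exactly $T_\pi(\bar G)$. So it suffices to show, for $\bar x \in \bar G_i$ and $\bar y \in \bar G_j$ (where $\bar G_i,\bar G_j$ are the images of $G_i,G_j$), that $[\bar x,\bar y]\in T_\pi(\bar G)$.

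Next I would form $\tilde G = \bar G / T_\pi(\bar G)$, which is $\pi$-torsion-free and nilpotent. Let $\tilde x,\tilde y$ be the images of $\bar x,\bar y$. By choice of $G_i$ and $G_j$ there are $\pi$-numbers $m,n$ with $x^m \in \gamma_i(G)$ and $y^n \in \gamma_j(G)$; since the lower central series is preserved under quotients, this gives $\tilde x^m \in \gamma_i(\tilde G)$ and $\tilde y^n \in \gamma_j(\tilde G)$. Standard commutator calculus then yields
\[
[\tilde x^m,\tilde y^n]\in [\gamma_i(\tilde G),\gamma_j(\tilde G)]\leq \gamma_{i+j}(\tilde G)=1,
\]
the last equality because $\tilde G$ is a quotient of $\bar G$. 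Hence $\tilde x^m$ and $\tilde y^n$ commute in the $\pi$-torsion-free nilpotent group $\tilde G$.

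Now Proposition \ref{Kh2} applies and gives $\tilde x \tilde y = \tilde y \tilde x$, i.e. $[\tilde x,\tilde y]=1$ in $\tilde G$. Lifting back, $[\bar x,\bar y]\in T_\pi(\bar G)$, which by the identification above means $[x,y]\in G_{i+j}$. Since $x\in G_i$ and $y\in G_j$ were arbitrary generators and $G_{i+j}$ is a subgroup, this yields $[G_i,G_j]\leq G_{i+j}$, which is precisely the centrality of the filtration \eqref{cf}. The main obstacle, and the reason for the two-stage quotient, is that one cannot invoke Proposition \ref{Kh2} directly in $\bar G$ because $\bar G$ need not be $\pi$-torsion-free; quotienting out $T_\pi(\bar G)$ is the device that creates a $\pi$-torsion-free ambient group while only losing information that lies inside $G_{i+j}$ anyway.
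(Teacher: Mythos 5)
Your argument is correct, and it is worth noting that the paper itself offers no proof of this lemma at all: it is simply quoted from Khukhro's book \cite{Kh1}, where it is obtained from P.~Hall--type isolator theorems for nilpotent groups (of the shape $[I_\pi(H),I_\pi(K)]\leq I_\pi([H,K])$). Your two-stage quotient is a genuinely self-contained alternative that uses only tools already stated in the paper: passing to $\bar G=G/\gamma_{i+j}(G)$ makes the ambient group nilpotent (so Proposition \ref{iso} applies and $G_{i+j}$ becomes the full preimage of the $\pi$-torsion subgroup $T_\pi(\bar G)$, which is characteristic, hence normal, since automorphisms preserve the property of having a trivial $\pi$-power), and then dividing by $T_\pi(\bar G)$ produces a $\pi$-torsion-free nilpotent group in which the commuting of $\tilde x^m$ and $\tilde y^n$ --- guaranteed by the standard inclusion $[\gamma_i,\gamma_j]\leq\gamma_{i+j}$, which you may cite as the three subgroups lemma --- can be fed into Proposition \ref{Kh2} to conclude $[\tilde x,\tilde y]=1$. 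Lifting back gives $[x,y]\in G_{i+j}$ for all generators, hence $[G_i,G_j]\leq G_{i+j}$, i.e.\ the centrality of the filtration \eqref{cf}; your observation that only information inside $G_{i+j}$ is lost in the second quotient is exactly the right point. What your route buys is independence from the external reference and from the Hall isolator machinery; what the citation buys the paper is brevity and the more general statement of isolator theory available in \cite{Kh1}. The only steps you use without proof (the subgroup inclusion $[\gamma_i(H),\gamma_j(H)]\leq\gamma_{i+j}(H)$ and the fact that surjections carry $\gamma_i$ onto $\gamma_i$) are standard and uncontroversial, so there is no gap.
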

 Now consider the additive abelian group
$$
L_{\pi}(G)=\overset{\infty}{\underset{i=1}{\oplus}} G_i/G_{i+1}\;\; \mbox{(direct sum)}
$$
Since $\{G_i| i=1,\dots\}$ is a central filtration of $G$, then we can define the structure of the Lie ring on the additive group $L_{\pi}(G)$: for any $\bar a_i\in G_i/G_{i+1}$ and $\bar a_j\in G_j/G_{j+1}$, by definition, $[\bar a_i,\bar a_j]=  [a_i,a_j] G_{i+j+1}\in G_{i+j}/G_{i+j+1}$. Then this bracket multiplication is extended to the direct sum
$L_{\pi}(G)=\overset{\infty}{\underset{i=1}{\oplus}} G_i/G_{i+1}$ by the distributive laws. We obtain the Lie ring $L_{\pi}(G)$ of the group $G$ corresponding to the central filtration (\ref{cf}). It is clear that the additive group of the ring $L_{\pi}(G)$ is $\pi$-torsion-free.

Let $F_n=F_n(X)$ be a relatively free $n$-step nilpotent group generated by $X$. A group $G=F_n/H$, where $H$ is a $\pi$-isolated verbal subgroup of $F_n$, is relatively free $n$-step nilpotent $\pi$-torsion-free. Note that if $T$ is a verbal subgroup of $F_n$, then the $\pi$-isolator $I_{\pi}(T)$ is a  $\pi$-isolated verbal subgroup of $F_n$.

The main aim of this section is to prove the following
\begin {theorem}\label{relfr}
Let $G$ be a relatively free nilpotent $\pi$-torsion-free group. Then $G$ is geometrically equivalent to its $\pi$-completion $\widehat{G}^\pi$.
\end {theorem}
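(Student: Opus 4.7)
My plan is to invoke Theorem \ref{criterion} and thereby reduce the statement to showing that $G$ is geometrically equivalent to $G^m$ for every $\pi$-number $m$. The inclusion $G^m\subseteq G$ already gives $G^m\preceq G$ by Proposition \ref{BG_1}, so the real content is the other direction $G\preceq G^m$. By Proposition \ref{BG_1} it is enough to exhibit an injective endomorphism $\varphi\colon G\to G$ whose image lies in $G^m$: once $G$ embeds into a copy of itself contained in $G^m$, $G\preceq G^m$ is immediate.

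Such a $\varphi$ has an obvious candidate. Since $G$ is relatively free in some (nilpotent, $\pi$-torsion-free) variety $\mathcal{V}$, the $m$-th powers of the free generators $x_i$ of $G$ still lie in a group of $\mathcal{V}$ and so satisfy every defining law of $G$. Hence Dick's Theorem produces a well-defined endomorphism $\varphi$ of $G$ with $\varphi(x_i)=x_i^m$, and by construction $\varphi(G)\subseteq G^m$. The whole theorem then reduces to proving that $\varphi$ is injective.

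For injectivity I plan to exploit the central filtration
\[
 G=G_1\geq G_2\geq\cdots\geq G_c\geq G_{c+1}=1,\qquad G_i:=I_\pi(\gamma_i(G)),
\]
introduced just above the theorem. It is $\varphi$-invariant, because $\gamma_i(G)$ is verbal and hence $\varphi$-stable, and if $g^n\in\gamma_i(G)$ for a $\pi$-number $n$ then $\varphi(g)^n=\varphi(g^n)\in\gamma_i(G)$, so $\varphi(g)\in G_i$. By the standard filtration argument, injectivity of $\varphi$ on $G$ follows from injectivity of the induced maps $\bar\varphi_i\colon G_i/G_{i+1}\to G_i/G_{i+1}$. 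My plan is to show that each $\bar\varphi_i$ is multiplication by $m^i$: the iterated commutator identity $[x_{j_1}^m,\ldots,x_{j_i}^m]\equiv[x_{j_1},\ldots,x_{j_i}]^{m^i}\pmod{\gamma_{i+1}(G)}$ gives this on basic commutators, and hence on all of $\gamma_i(G)G_{i+1}/G_{i+1}$; the $\pi$-torsion-freeness of $G_i/G_{i+1}$ noted in the excerpt for the Lie ring $L_\pi(G)$ then lets me lift the equality to all of $G_i/G_{i+1}$ by a $\pi$-root argument. Since $m$ is a $\pi$-number and the graded pieces are $\pi$-torsion-free, multiplication by $m^i$ is injective on each, and injectivity of $\varphi$ follows.

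The main obstacle I anticipate is precisely this last extension step: the identity $\bar\varphi_i=m^i\cdot\mathrm{id}$ is straightforward on the coarse piece $\gamma_i(G)/\gamma_{i+1}(G)$ via commutator collection, but pushing it up to the $\pi$-isolator quotient $G_i/G_{i+1}$ needs the $\pi$-torsion-freeness of the Lie ring $L_\pi(G)$ to make the unique-$\pi$-root computation legitimate. Once this is nailed down the rest is assembly: $\varphi$ embeds $G$ into $G^m$, giving $G\preceq G^m$; combined with $G^m\preceq G$ this yields geometric equivalence of $G$ with $G^m$ for every $\pi$-number $m$, and Theorem \ref{criterion} then delivers the geometric equivalence of $G$ with $\widehat{G}^\pi$.
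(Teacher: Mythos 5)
Your proposal is correct and follows essentially the same route as the paper: reduce via Theorem \ref{criterion} to geometric equivalence of $G$ with $G^m$, extend $x_i\mapsto x_i^m$ to an endomorphism by relative freeness, and prove injectivity through the $\pi$-isolator filtration $G_i=I_\pi(\gamma_i(G))$, using the commutator identity on length-$i$ commutators together with a $\pi$-root argument and the $\pi$-torsion-freeness of the graded quotients $G_i/G_{i+1}$. The only cosmetic difference is that you establish $\bar\varphi_i=m^i\cdot\mathrm{id}$ on each graded piece, whereas the paper verifies $\mathrm{Ker}\,\bar\varphi=0$ directly by the same computation.
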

\begin{proof}
By Theorem \ref{criterion},  it suffices to prove that the group $G$ is geometrically equivalent to the subgroups $G^m$ for every $\pi$-number $m$. Let $(x_i)_{i\in I}, I\subseteq \mathbb{N}$, be free generators of the group $G$. Consider the map $\varphi: G\rightarrow G$ such that $\varphi(x_i)=x_i^{m}, i\in I$, where $m$ is a $\pi$-number. Since $G$ is a relatively free group, the map $\varphi$ can be extended to the endomorphism of $G$. This endomorphism is also denoted by $\varphi$. Let us show that ${\rm{Ker}}~\varphi=0$.

To this end we consider the Lie ring $L_{\pi}(G)=\overset{n-1}{\underset{i=1}{\oplus}} G_i/G_{i+1}$, where $n$ is the nilpotency class of $G$. Since $\varphi(\gamma_i(G))\leq \gamma_i(G)$, we have $\varphi(G_i)\leq G_i$. Therefore, the endomorphism
$\varphi:G\rightarrow G$ induces the endomorphism $\bar\varphi: L_{\pi}(G)\rightarrow L_{\pi}(G)$ of the ring $L_{\pi}(G)$ such that
 $\bar\varphi(g_iG_{i+1})=\varphi(g_i)G_{i+1}$ for $g_i\in G_i$. Let us write $\bar\varphi=(\bar\varphi_i)_{i\in J}$, where $\bar\varphi_i, i\in J=\{1,\dots,n-1\}$, are the coordinate functions of $\bar\varphi$. We first prove that ${\rm{Ker}}~\bar\varphi=0$.

Let $\bar g=(\bar g_i)_{i\in J}$, where $\bar g_i=g_iG_{i+1}$, belongs to ${\rm{Ker}}~\bar\varphi$. Since $g_i\in G_i=I_\pi{(\gamma_i{(G)}})$, there exists $\pi$-number $s_i$ such that $g_i^{s_i}\in \gamma_i{(G)}$. In $L_{\pi}(G)$ we obtain
$$
s_ig_i={\underset{k}\sum v_{i_{k}}}\mod\gamma_{i+1}{(G)},
$$
where $v_{i_{k}}=v_{i_{k}}(x_{t_1},\dots,x_{t_r})$ are simple commutators in $x_{t_1},\dots,x_{t_r}$ of the length $i$.
Without loss of generality, we assume that $s_ig_i \notin G_{i+1}$. We obtain in $L_{\pi}(G)$
$$
s_ig_i={\underset{k}\sum v_{i_{k}}}\mod G_{i+1},
$$
i.e., $s_i\bar g_i={\underset{k}\sum \bar v_{i_{k}}}$, where $\bar v_{i_{k}}=v_{i_{k}}G_{i+1}$.
Thus we have
$$
\bar\varphi_{i}(s_i\bar g_i)={\underset{k}\sum \bar v_{i_{k}}}(x^m_{t_1},\dots,x^m_{t_r})=m^i{\underset{k}\sum \bar v_{i_{k}}(x_{t_1},\dots,x_{t_r})}=m^is_i\bar g_i
$$
Therefore, $s_i\bar\varphi_{j}(\bar g_i)=s_i m^i \bar g_i$. Since $s_i$ and $m$ are $\pi$-numbers and the group $G_i/G_{i+1}$ is $\pi$-torsion-free, $\bar g_i=0$. Hence, ${\rm{Ker}}~\bar\varphi=0$.

Now we prove that ${\rm{Ker}}~\varphi=0$. Let $\varphi(h)=0$ for some $h\in G_i\setminus G_{i+1}$. Since $\varphi(h)=0$, we have
$\bar\varphi(\bar h)=0$, i.e., $h\in G_{i+1}$. This contradiction proves desired.

As a consequence, we obtain that $\varphi$ is a monomorphism $G$ into $G^m$. Therefore, $G\preceq G^m$ for every $\pi$-number $m$.
It is clear that $G^m\preceq G$. Hence, $G$ is geometrically equivalent to $G^m$ for each $\pi$-number $m$. By Theorem \ref{criterion}, the group $G$ is geometrically equivalent to its $\pi$-completion $\widehat{G}^\pi$.
\end{proof}
In the special case, that $\pi$ consists of all primes, we obtain
\begin{corollary}[\cite{T}]
A relatively free nilpotent torsion-free group is geometrically equivalent to its Mal'tsev's completion.
\end{corollary}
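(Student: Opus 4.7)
The strategy is to reduce the statement, via Theorem \ref{criterion}, to showing that $G$ is geometrically equivalent to $G^m$ for every $\pi$-number $m$. Since $G^m\leq G$, Proposition \ref{BG_1} already gives $G^m\preceq G$ for free. So the work is to establish $G\preceq G^m$, and for this it is enough, by Proposition \ref{em}, to embed $G$ into a Cartesian power of $G^m$. Because $G$ is relatively free, the obvious thing to try is the endomorphism $\varphi$ obtained by extending $x_i\mapsto x_i^m$ on a free generating set $(x_i)_{i\in I}$; its image lies in $G^m$, so it suffices to prove $\varphi$ is injective (then $\varphi$ itself furnishes the required embedding $G\hookrightarrow G^m$).

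To show injectivity of $\varphi$, I would pass to an associated graded Lie ring built out of the $\pi$-isolated lower central series $G_i=I_\pi(\gamma_i(G))$, as introduced just before the theorem. Because $[G_i,G_j]\leq G_{i+j}$ (the central filtration lemma quoted from \cite{Kh1}), the direct sum $L_\pi(G)=\bigoplus_{i\geq 1} G_i/G_{i+1}$ is a Lie ring whose homogeneous components are $\pi$-torsion-free. Since $\varphi(\gamma_i(G))\leq\gamma_i(G)$, and taking $\pi$-isolators is monotone, $\varphi$ preserves each $G_i$ and induces a graded endomorphism $\bar\varphi$ of $L_\pi(G)$.

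The key computation is then that $\bar\varphi$ acts on the $i$-th graded piece $G_i/G_{i+1}$ as multiplication by $m^i$, up to the clearing of denominators arising from the passage to the isolator. Concretely, for $g_i\in G_i$ choose a $\pi$-number $s_i$ with $g_i^{s_i}\in\gamma_i(G)$; then $s_ig_i$ is a sum of classes of basic commutators of length $i$, and each such basic commutator $v_{i_k}(x_{t_1},\dots,x_{t_r})$ gets replaced under $\varphi$ by $v_{i_k}(x_{t_1}^m,\dots,x_{t_r}^m)$, which in the graded Lie ring coincides with $m^i\,v_{i_k}$. This yields $s_i\bar\varphi(\bar g_i)=s_im^i\bar g_i$. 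Because $s_im^i$ is a $\pi$-number and $G_i/G_{i+1}$ is $\pi$-torsion-free, $\bar\varphi(\bar g_i)=0$ forces $\bar g_i=0$, so $\mathrm{Ker}\,\bar\varphi=0$.

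Finally I would lift injectivity of $\bar\varphi$ to $\varphi$ by the standard filtration argument: if $\varphi(h)=1$ for some $1\neq h\in G$, choose the unique $i$ with $h\in G_i\setminus G_{i+1}$; the class $\bar h\in G_i/G_{i+1}$ is nonzero but $\bar\varphi(\bar h)=0$, contradicting $\mathrm{Ker}\,\bar\varphi=0$. Thus $\varphi:G\hookrightarrow G^m$ is an embedding, so $G\preceq G^m$, and Theorem \ref{criterion} concludes the proof. The main subtle point I expect is the verification that the induced map on $L_\pi(G)$ is multiplication by $m^i$ on the $i$-th component; this is where one must be careful that the passage from $\gamma_i(G)$ to its $\pi$-isolator $G_i$ does not break the computation, and it is precisely the $\pi$-torsion-freeness of the graded pieces that saves it.
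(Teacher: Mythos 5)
Your argument is correct and is essentially the paper's own route: the corollary is just the case where $\pi$ is the set of all primes of Theorem \ref{relfr}, and your proof reproduces that theorem's proof verbatim in spirit — reduce via Theorem \ref{criterion} to $G\preceq G^m$, extend $x_i\mapsto x_i^m$ to an endomorphism using relative freeness, and prove injectivity through the graded Lie ring $L_\pi(G)$ on the $\pi$-isolated lower central series, where the induced map is multiplication by $m^i$ on each $\pi$-torsion-free component. The only difference is that the paper simply cites Theorem \ref{relfr} at this point, whereas you re-derive it; the substance is the same.
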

It is known (see \cite {P4},\cite {MR}) that two nilpotent groups of the same nilpotency class are geometrically equivalent if and only if they have the same quasiidentities. By Theorem \ref{relfr}, we obtain
\begin{corollary}
A relatively free nilpotent $\pi$-torsion-free group and its $\pi$-completion define the same quasi-variety.
\end{corollary}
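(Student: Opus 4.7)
The plan is brief, because this corollary is packaged as an immediate consequence of two ingredients already on the table: Theorem \ref{relfr} and the classical fact (cited from \cite{P4, MR} in the sentence preceding the corollary) that two nilpotent groups of the same nilpotency class are geometrically equivalent if and only if they satisfy the same quasi-identities. Since a quasi-variety is by definition the class of models of a set of quasi-identities, sharing the same quasi-identities is the same as defining the same quasi-variety, so my job reduces to checking that the hypotheses of the cited criterion are met and invoking Theorem \ref{relfr}.

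First I would verify the nilpotency-class hypothesis. By Theorem \ref{Kh1}(a), the $\pi$-completion $\widehat{G}^\pi$ of a $\pi$-torsion-free nilpotent group $G$ of class $c$ is itself nilpotent of the same class $c$. Hence $G$ and $\widehat{G}^\pi$ are two nilpotent groups of the same nilpotency class, so the criterion from \cite{P4, MR} applies to this pair.

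Second, Theorem \ref{relfr} asserts that a relatively free nilpotent $\pi$-torsion-free group $G$ is geometrically equivalent to $\widehat{G}^\pi$. Feeding this geometric equivalence into the nilpotent-class criterion, we conclude that $G$ and $\widehat{G}^\pi$ satisfy exactly the same quasi-identities, and therefore generate the same quasi-variety.

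I do not expect any genuine obstacle: the corollary is a straightforward assembly of cited results. The only mild subtlety worth spelling out is that one must apply the criterion of \cite{P4, MR} as a black box (it is invoked, not reproved here), and that the nilpotency class is preserved under $\pi$-completion, which is precisely the content of Theorem \ref{Kh1}(a) and is what allows the criterion to be applied to the pair $(G,\widehat{G}^\pi)$.
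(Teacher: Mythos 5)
Your proposal is correct and follows the paper's own argument: the corollary is obtained exactly by combining Theorem \ref{relfr} with the cited fact from \cite{P4,MR} that nilpotent groups of the same class are geometrically equivalent if and only if they satisfy the same quasi-identities. Your explicit check via Theorem \ref{Kh1}(a) that the $\pi$-completion has the same nilpotency class is a useful detail the paper leaves implicit, but it does not change the route.
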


   \section{Acknowledgments}
I would like to express my appreciation to Prof. B. I. Plotkin for many inspiring discussions. I am also very pleased to thank
 Prof. G. I. Mashevitzky whose suggestions helped make the paper more readable.

\bibliographystyle{amsalpha}

\end{document}